\newcommand{\erre}{\mathbb{R}}
\newtheorem{theorem}{Theorem}{}
{}
{}
\newtheorem{proposition}[theorem]{Proposition}
\newtheorem{definition}{Definition}
\title{Fractional Lotka-Volterra model with time-delay and delayed controller for a bioreactor}
\author[1]{R. Villafuerte-Segura}
\author[2]{B.~A.~Itz\'a-Ortiz}
\author[3]{P.~A.~L\'opez-Perez}
\author[1]{E. Alvarado-Santos}
\affil[1]{Centro de Investigaci\'on en
Tecnolog\'ias de Informaci\'on y Sistemas, Universidad Aut\'onoma del Estado de Hidalgo, Pachuca, Hidalgo, Mexico, 42180}
\affil[2]{Centro de Investigaci\'on en Matem\'aticas,Universidad Aut\'onoma del Estado de Hidalgo, Pachuca, Hidalgo, Mexico, 42180 }
\affil[3]{Escuela Superior de Apan, Universidad Aut\'onoma del Estado de Hidalgo, Apan, Hidalgo, Mexico, 43900}
\date{}                     
\begin{document}
\maketitle

\begin{abstract}
In this paper, a fractional Lotka-Volterra mathematical model for a bioreactor is proposed and used to fit the data provided by  a bioprocess known as continuous fermentation of \textit{Zymomonas mobilis}. 
The model contemplates  a time-delay $\tau$   due to the dead-time in obtaining the measurement of biomass $x(t)$. A Hopf bifurcation analysis is performed 
to characterize the inherent self oscillatory experimental bioprocess response. As consequence, 
  stability conditions for the equilibrium point together with conditions for limit cycles using the delay $\tau$ as bifurcation parameter are obtained.
Under the assumptions  that the use of observers, estimators or extra laboratory measurements are avoided to prevent the rise of computational or monetary costs, for the purpose of control, we will only consider  the measurement of the biomass. A  simple controller that can be employed is the proportional action controller $u(t)=k_px(t)$, which is shown to fail  to stabilize the obtained model under the proposed analysis. Another suitable choice is the use of a delayed controller $u(t)=k_rx(t-h)$ which successfully stabilizes the model even when it is unstable.
 Finally, the proposed theoretical results are corroborated through numerical simulations. \vskip 2mm

\textbf{Keywords:} bioreactor,  bifurcations, time-delay systems,  fractional ma\-the\-ma\-ti\-cal model.

\end{abstract}

\section{Introduction}\label{intro}

In addition  of its academic  and performance predictive importance, ma\-the\-ma\-ti\-cal models for bioreactors are required tools  for implementing process control strategies  demanded in industrial bioprocess. 
Using these devices, researches have successfully cultivated important kind of cells for the pharmaceutic industry \cite{Ho_etal}, developed complex organ cultures \cite{Gantenbein_etal,Schmidt}, designed wastewater treatments \cite{Appels,Price}, increased hydrogen production \cite{Lopez}, and studied microbial biomass and energy conversion \cite{Qiang}, among many other relevant applications.

In  the case of culturable microbial species, the literature contains numerous examples modeling their iteration \cite{MAHADEVAN2012e201210008,WADE201664,Wintermute}
and their consequent production of biochemical substances \cite{Pham}. 
A fundamental part of bioprocessing is to create  favorable environmental conditions for microorganisms based on different substrate compositions. In order to achieve this conditions, positive feedback loops are essential. Mathematically speaking, this amounts to the need to have parametric conditions for the characterization of bifurcations and the appearance of limit cycles of the dynamical system modeling the bioreactor. In the literature, modeling and bifurcation analysis using the dilution rate as a parameter are predominant.


It is known that laboratory observations exhibit oscillatory behavior. The oscillatory behavior manifests itself as the self-sustained oscillations (SSO) among key process variables and parameters (e.g.\ substrate, biomass, metabolites, dilution rate, temperature, agitation speed, pH, constant feed and culture conditions) \cite{2,1}. Consequently, diverse phenomena, such as  multiplicity of steady states, stability of limit cycles or chaoticity,  may be  present in the bioprocess \cite{4,3,5}.
In some bioprocesses it has been observed that the oscillatory mode of operation can have higher performance in comparison to the obtained in steady-state regime (e.g. higher  biomass and metabolites concentrations) \cite{7,8,6}. 
Therefore it as important to understand the conditions for the oscillatory process and as it is highly desirable to know how to attenuate the oscillatory changes in the product concentration (biomass and metabolites). The above can be resolved via the manipulation of a parameter (set point: dilution rate, temperature, pH or dissolved oxygen), the use of input effluent stabilization tanks, or by the design of  controllers that stabilize self-oscillating bioreactors by manipulating the input \cite{9}.

Naturally, delays are always present in bioprocesses, since there are dead times between biological reactions/interactions, metabolization of substances, considerations of past population rates, among others. Therefore, it is necessary to consider these in mathematical modeling \cite{gopalsamy2013stability,niculescu2007geometric,rubio2012control,villafuerte2007estimate,villafuerte2010stability,volterra1928theorie}. 

Some members of the scientific community have argued that time delays should be omitted or disregarded from mathematical models since they may cause undesirable/poor behavior in the system response or even compromise its stability.  However, this interpretation is wrong. In the last two decades a part of the scientific community has devoted efforts to studying the effect of considering time-delays,  dead-time or just delays  in  mathematical models  \cite{abdallah1993delayed,de1996controlling,suh1980use,suh1979proportional,swisher1988design,ulsoy2015time,villafuerte2013tuning,villafuerte2018tuning}. As previously mentioned, the delays are inherent (perceptible or not with the naked eye) in the real/experimental processes so that the omission of these in the mathematical models seems to be only for the purpose of facilitating its analysis.

Although the analysis of a mathematical model is rather harder for dynamical systems admitting a  delay,  this paper attempts to provide one more  example that this extra work is worthwhile as the use of a  delay allows the fitting of data from a real/experimental process. For example, delays seen as bifurcation parameters are useful tools in obtaining conditions for the existence of  limit cycles and Hopf bifurcations. Furthermore, delays may create  the conditions for the stabilization of a system otherwise unstable. In the last decade it has been shown that the deliberate inclusion of delays in the control laws can substitute the development of observers/estimators of unavailable state variables  \cite{kokame2002stability,niculescu2003some,olgac2002exact}. In addition to reducing the noise present in measuring instruments \cite{mondie2011tuning}. In bioprocesses, costs can be reduced by avoiding biomass or substrate measurement tests.

In this work  a fractional Lotka-Volterra model with time delay and delayed
controller for a bioreactor,  which will turn out to fit the data provided by  a bioprocess known as continuous fermentation of \textit{Zymomonas mobilis}, is proposed. Afterwards a mathematical analysis is conducted providing conditions for the  appearance of a Hopf bifurcations and stability of a equilibrium point using the delay as a bifurcation parameter. The novelty of our approach relays on three facts. The first one is that the  proposed mathematical model uses fractional exponents to scale the state variables, an approach successfully used to model the dynamics of cancer cell \cite{TannenbaumEtAl} and in epidemiology models \cite{Tagh}. Secondly is the
inclusion of a time-delay and thirdly the inclusion of  a delayed controller  to manipulate the inflow of the substrate to stabilize the bioreactor model even under unstable conditions.
This approach allows the design and tuning of delayed control laws to minimize costs and maximize productions of bioprocessing. To illustrate the theoretical results proposed here, we present numerical simulations.

The paper is organized as follows. The description of the biological system is shown in Section 2.  A description of the proposed fractional Lotka-Volterra  model with time-delay and delayed controller together with the corresponding mathematical analysis 
are presented in Section 3.  Here, we use $u(t)=D$ to identify the parameters of the model  and to characterize the critical parameters of $\tau$ which allow limit cycles and Hopf bifurcations. Then, we propose $u(t)=k_r x(t-h)$ for the design and tuning of a delayed controlled to $\sigma$-stabilize the model. In Section 4, the implementation and validation of the previous theoretical results are given. Concluding remarks are stated in Section 5.  Finally, some notation and preliminary results  concerning to stability of time-delay system are stated in the Appendix.


\section{{Description} of the bioreactor}\label{ProblemStatement}

In this section we present the description of the physical plant. It was the desire of modeling and controlling such a plant that motivated the present work. Although these bioprocess are well understood, we were unable to find in the literature a specific mathematical model fitting the data obtained in our experiments.

In Figure~\ref{EsquemaBio} we present a diagram showing a pair of tanks composing the basic structure of our bioreactor. The main tank, where the mixing occurs, is where the inoculum lies. We then supply a percentage $u(t)=D$ of the substrate to enter this tank through a valve from the secondary tank.

\begin{figure}
	\centering
	\includegraphics[scale=.25]{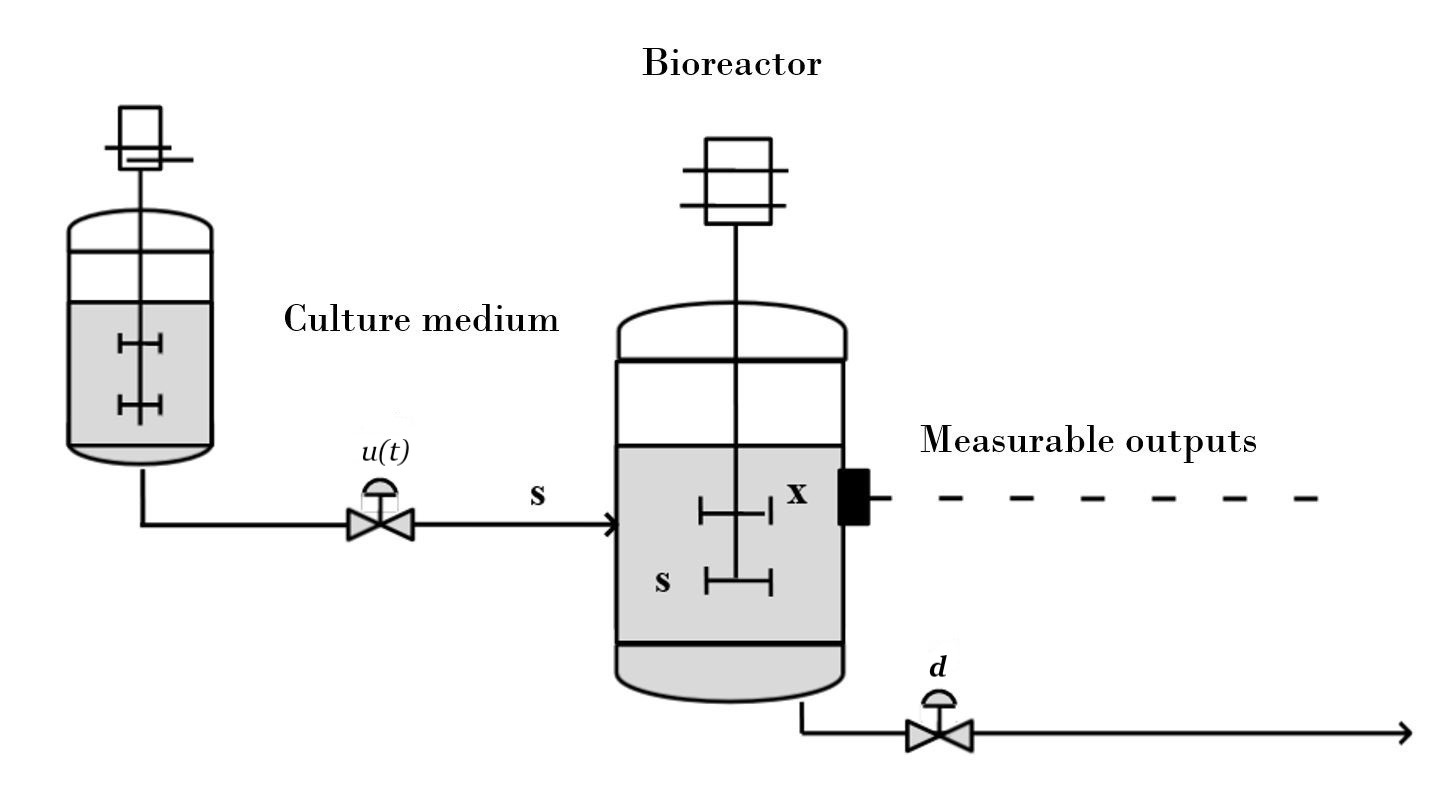}\\
	\caption{Bioreactor diagram}
	\label{EsquemaBio}
\end{figure}

\medskip
\noindent


The fermentation in continuous operation were carried out in a bioreactor containing broth medium 50 ml in a volume of 250 ml, inoculated with colonies of {\it Zymomonas mobilis} for 80 hours.  Samples were incubated at $30\sp\circ$ C and 100 rpm. This experiment was performed in triplicate. Cocoa juice was extracted from cocoa fermentation baskets after two days. Juice samples were immediately stored in sterile bottles. For experiments, cocoa pulp juice was adjusted to 10 g/L of glucose with reducing sugar. Reaction mixture was analyzed by the reducing sugar, applying the 3,5-dinitrosalicyclic acid method \cite{Miller}. The quantity of reducing sugars was calculated using the regression equation, consisting of a standard curve with glucose (1 mg/mL). The bacterial biomass growth was evaluated by dry weight method \cite{APHA}. A 5 mL-aliquot was taken  5 hours  for analyses of biomass and substrate.

Next, a description of the experiment when the maximum biomass an minimal substrate is obtained.
Substrate consumption of cocoa bean pulp during fermentation occurred during the first 25 h of fermentation until a remnant of 0.9 g/L remained. Changes in glucose concentration demonstrated metabolic changes during the fermentation of different hybrids. A decrease in substrate favored the growth of biomass because these microorganisms use this chemical compound as substrate \cite{Moreira}. The maximum biomass concentration in the treatment of glucose to 8.1 g /L was obtained in the first 20 h of a operation  in  continuous fermentation for a dilution rate of 0.15 1/h. The experimental data obtained in the bioprocess is given in Table~\ref{Tab_ExpData}. Note that an oscillatory behaviour is observed in the dynamics of the experimental bioprocess. 

\begin{table}[ht]
		\begin{tabular}{lllll}
		\hline
		\textbf{Time}  &  \textbf{Biomass} & \textbf{Error} & \textbf{Substrate} & \textbf{Error}\\
		$[hours]$ & $[g/L]$ &  $5\%$ & $[g/L]$ & $5\%$ \\
		\hline
		0 & 0.1 & 0.005 & 10.0 & 0.5  \\
		5 & 0.3  & 0.015 & 9.0 & 0.45  \\
		10 & 1.1 & 0.055 & 7.8 & 0.4 \\
		15 &	4.3 & 0.215	& 6.1 & 0.33 \\
		20 & 8.1 & 0.405 & 3.4 & 0.175  \\
		25 & 5.1 & 0.255 & 0.9 & 0.045  \\
		30 & 4.0 & 0.20 & 2.1 & 0.105  \\
		35 & 4.5 & 0.225 & 2.6 & 0.13 \\
		40 & 5.1 & 0.255 & 1.6 & 0.08 \\
		45 & 4.9 & 0.245 & 1.9 & 0.095 \\
		50 & 4.3 & 0.215 & 2.5 & 0.125 \\
		55 & 4.9 & 0.245 & 1.7 & 0.085 \\
		60 & 4.1 & 0.205 & 2.4 & 0.12 \\
		65 & 5.0 & 0.25 & 1.6 & 0.08  \\
		70 & 4.2 & 0.21 & 2.3 & 0.115 \\
		75 & 4.9 & 0.245 & 1.4 & 0.07 \\
		80 & 4.0 & 0.20 & 2.0 & 0.1 \\
		\hline
	\end{tabular}	
	\caption{Experimental data obtained from the biochemical process.}\label{Tab_ExpData}
\end{table}

\section{Description and analysis of mathematical model}
This section presents a description and mathematical analysis of the fractional Lotka-Volterra model with time-delay and delayed controller. First, the model is presented together with a theoretical justification. Next, the analysis of the model with constant excitement is conducted and finally an analysis of the model with the delayed controller, including a description for the tuning of the controller is included.
It is important to emphasize that, in the next section, our model will be shown to fit the experimental data described in the previous section.

\subsection{Description of the fractional Lotka-Volterra model with time delay for a bioreactor}

In this subsection a fractional Lotka-Volterra model with time-dealy for a bioreactor based on the Lotka–Volterra equations is introduced. One reason for choosing this well known predator-prey mathematical model is the appearance of periodic solutions, a phenomena  observed in the experimental data of a bioreactor.  Evidently, the Lotka-Volterra equations alone are not able to capture the rich variety of dynamics involved in a bioreactor. For this purpose, two context specific modifications are proposed: By following an approach gaining prominence in dealing with these complexities, first fractional exponents in the variables have been introduced \cite{Tagh,TannenbaumEtAl}, and secondly, a time-delay term has been added in one of the differential equations
\cite{Forde}. In doing so it turned out the  the mathematical model  adequately  fits the data provided.  

For modeling a bioreactor, the following fractional Lotka-Volterra equations  with time-delay and delayed control is introduced:
\begin{equation}\label{MatModelBio_gral}
\begin{split}
\dot{s}(t) &=  -a s(t)^{\beta} - b s(t)^{\beta}x(t-\tau)^{\alpha}   +  \left(s_0-s(t)\right) u(t),\\
\dot{x}(t) &=   cs(t)^{\beta} {x(t)}^{\alpha}  -d {x(t)}^{\alpha}    + ex(t),
\end{split}
\end{equation}
where $a,\ b,\ c,\ d,\ e\in\erre\sp{+}$ are constant positive parameters of the bioreactor,  $\alpha,\, \beta\in\erre^+$ are positive fractional-exponents, $s_0\in\erre^+$ is the positive initial supply of substrate, $\tau\in\erre^+$ is a non-negative constant delay, $s(t)$ is the amount of subs\-tra\-te  present in the bioreactor at time $t$, $x(t)$ is the amount of such produced biomass at time $t$, and $u(t)\in[0,1]$ is the input signal to manipulate the inflow of substrate into the bioreactor.

When  setting $x(t)=0=u(t)$, the first equation in the model (\ref{MatModelBio_gral}) reduces to $\dot{s}(t)=-a s(t)^{\beta}$. That is to say, in the absence of biomass, the rate of change of the substrate is proportional to a scaling of the substrate by an exponent $\beta$. The negative sign reflects that the substrate in the bioreactor actually decays in the absence of biomass, in contrast to a classical Lotka-Volterra approach where intrinsic rate of prey population increase. It has been observed experimentally  that the value of $\beta$ satisfy the inequality $0<\beta<1$, in particular the substrat, in absence of biomass, is subject to a non-exponential decay. 

On the other hand, for $s(t)=0$, the second equation in the model~(\ref{MatModelBio_gral}) becomes $\dot{x}(t)= -c {x(t)}^{\alpha}    + ex(t)$. That is to say, in the absence of substrate, the biomass follows a decay proposed in the Norton-Simons-Massagu\'e (NSM) model. While the the NSM model actually describes the growth of tumors, the sign of the terms on the right hand side of the NSM model have been reversed  to describe instead the decay of an organism under energy constrains. This approach has been successfully taken in very recent works on the analysis of cancer cells \cite{TannenbaumEtAl} and epidemiological models \cite{Tagh}. Here lies another of the proposed modification of the classical Lotka-Volterra model, where the rate of change of predators in absence of prey is assumed to be proportional to their population. 

The Lotka-Volterra equations are complemented with the terms containing the scaled products which represent the rates of decay and growth provided by the biochemical iterations between substrate and biomass. That is to say,  the terms $bx(t)\sp\alpha s(t)\sp\beta$ and $dx(t)\sp\alpha s(t)\sp\beta$ replace $bx(t)s(t)$ and $dx(t)s(t)$ used in the original model, respectively.  The populations $x(t)$ and $s(t)$ are considered to be scaled by the same exponents $\alpha$ and $\beta$ described above.

Finally, the delay in the variable $x(t)$ in the first equation of the system (\ref{MatModelBio_gral}) is justified as follows: the effect of the biomass in the rate of consumption of  the substrate is affected by a dead time produced either by a gestation  time-delay or by a time delayed measurement. Mathematically, this delay will turn out to produce the possibility for  a stable equilibrium point of the fractional Lotka-Volterra model~(\ref{MatModelBio_gral}), in sharp contrast with the non-stability of equilibrium points of a classical Lotka-Volterra model. Thus the introduction of this time-delay favors the modeling of our bioreactor, where stability of fixed points is observed experimentally. Furthermore, biomass production $x(t)$ can only be manipulated indirectly by supplying substrate $s(t)$, so that the only controllable variable is the substrate $s(t)$ through a control action $u(t)\in[0,1]$ that regulates the flow of the substrate through the bioreactor. Here $u(t)=0$  represents zero flow (valve fully closed) and $u(t)=1$ represents total flow (valve fully open), see Figure~\ref{EsquemaBio}. Therefore, the first equation in (\ref{MatModelBio_gral}) has control action $u(t)$ while the second equation has no control action.

A summary of  the parameters in the fractional Lotka-Volterra model (\ref{MatModelBio_gral}) is shown next.
\begin{itemize}
	\item The parameter $a$ is the intrinsic rate of the amount of scaled substrate [Units: 1/h].
	\item The parameter $\beta$ is the fractional-exponent scaling the amount of substrate [Units: Dimensionless].
	\item The parameter  $b$ is the rate of consumption of the scaled substrate by the scaled biomass [Units: 1/h]. 
	\item The parameter $\alpha$ is the fractional-exponent of growth of the biomass [Units: Dimensionless].
	\item The parameter $c\not=0$  denotes the net rate of growth the scaled biomass in response to the size of the scaled substrate [Units: 1/h].
	\item The parameter $d$ measures the  
	output of the scaled biomass from the bioreactor 
	[Units: 1/h].
	\item The parameter $e$ quantifies anabolism (growth rate) of the biomass. 
\end{itemize}

\subsection{Analysis of mathematical model with constant excitement}

To obtain a parametric identification of the model proposed in (\ref{MatModelBio_gral}), we begin by assuming that the input signal $u(t)=D$ is constant and positive. This amounts to open the valve for the substrate to a constant input flow. Our purpose is to achieve the tuning of the parameters so that our model resembles the experimental data of a bioreactor. In addition, we will give conditions  for obtaining limit cycles and Hopf bifurcations using the time-delay as a bifurcation parameter.

Thus, the mathematical model (\ref{MatModelBio_gral}) is rewritten as
\begin{equation}\label{bio}
\begin{split}
\dot{s}(t) &=  -a s(t)^{\beta} - b s(t)^{\beta} x(t-\tau)^{\alpha}  + D\left(s_0-s(t)\right),\\
\dot{x}(t) &=   cs(t)^{\beta} {x(t)}^{\alpha}  -d {x(t)}^{\alpha}    + ex(t).
\end{split}
\end{equation}
\begin{proposition}\label{Prop_EquiPoint}
	Suppose that the  equation
	\begin{equation}\label{root}
	\left(a + bx^{\alpha} \right)   \left( \frac{d
		- ex^{1-\alpha}}{c}
	\right)  +D \left(  \left( \frac{d- ex^{1-\alpha}}{c} \right) ^{\frac{1}{\beta}}- s_{0} \right)= 0, 
	\end{equation}
has a root $x=x\sp\ast$  such that  $0<x\sp\ast< \left(\frac{c}{e}\right)\sp\frac{1}{1-\alpha}$.
	 Denote 
	\begin{equation*}
	s^{*}=\left(\frac{d- e(x^{*})^{1-\alpha}}{c} \right) ^{\frac{1}{\beta}}>0.
	\end{equation*}
	Then  $z\sp\ast=(s^*,x^*)\sp{\intercal}$ is a positive equilibrium point of the model given in (\ref{bio}).
\end{proposition}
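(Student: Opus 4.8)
The plan is to treat the statement as a direct verification: an equilibrium of the delay system (\ref{bio}) is exactly a constant solution $(s(t),x(t))\equiv(s^*,x^*)$, for which the delayed argument satisfies $x(t-\tau)=x^*$ and both derivatives vanish. Substituting these constants turns the two differential equations into the algebraic system
\begin{equation*}
c(s^*)^{\beta}(x^*)^{\alpha}-d(x^*)^{\alpha}+ex^*=0,\qquad -a(s^*)^{\beta}-b(s^*)^{\beta}(x^*)^{\alpha}+D(s_0-s^*)=0,
\end{equation*}
and the whole argument reduces to showing that this pair is equivalent to the conjunction of ``$x^*$ solves (\ref{root})'' and ``$s^*$ is given by the displayed formula''.

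First I would exploit the second equation (coming from $\dot x=0$), which is the simpler one since it involves neither $D$, $b$, nor $s_0$. Because $x^*>0$ by hypothesis, I can factor out $(x^*)^{\alpha}$ and divide by it, obtaining $c(s^*)^{\beta}=d-e(x^*)^{1-\alpha}$, that is $(s^*)^{\beta}=\bigl(d-e(x^*)^{1-\alpha}\bigr)/c$; taking the real $\beta$-th root yields precisely the stated expression for $s^*$. This is also the step where well-definedness and positivity must be checked: the $\beta$-th root returns a positive real number only when the radicand $\bigl(d-e(x^*)^{1-\alpha}\bigr)/c$ is strictly positive, and this is exactly the role of the upper bound imposed on $x^*$, which forces $e(x^*)^{1-\alpha}<d$ and hence keeps the numerator positive while $c>0$. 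Thus $s^*>0$ as claimed.

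Next I would substitute this value of $(s^*)^{\beta}$, together with the corresponding $s^*$, into the first equation (coming from $\dot s=0$). Factoring $(s^*)^{\beta}$ out of its first two terms gives $(s^*)^{\beta}\bigl(a+b(x^*)^{\alpha}\bigr)=D(s_0-s^*)$, and replacing $(s^*)^{\beta}$ by $\bigl(d-e(x^*)^{1-\alpha}\bigr)/c$ and $s^*$ by its root expression produces, after moving everything to one side, exactly equation (\ref{root}) evaluated at $x=x^*$. Hence the first equilibrium equation holds if and only if $x^*$ is a root of (\ref{root}), which is the standing hypothesis. Collecting the pieces, $z^*=(s^*,x^*)^{\intercal}$ satisfies both equilibrium equations and has strictly positive coordinates, so it is a positive equilibrium of (\ref{bio}).

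I do not anticipate a genuinely hard step here; the argument is a substitution combined with careful bookkeeping of the fractional powers. The only points demanding care are the legitimacy of dividing by $(x^*)^{\alpha}$, which is secured by $x^*>0$, and the positivity and real-valuedness of $(s^*)^{\beta}$ and $s^*$, which is precisely why the hypothesis pins $x^*$ below the threshold $(c/e)^{1/(1-\alpha)}$. I would double-check the constant appearing in this threshold against the inequality $e(x^*)^{1-\alpha}<d$ actually required for the radicand to be positive, since the exponent algebra around the fractional powers is the one place where a slip could pass unnoticed.
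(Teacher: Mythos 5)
Your proof is correct and follows essentially the same route as the paper's: treat the equilibrium as a constant solution so that $x(t-\tau)=x^{*}$, divide the second equation by $(x^{*})^{\alpha}$ (legitimate since $x^{*}>0$) to solve for $s^{*}$, and substitute that expression into the first equation to recover equation (\ref{root}). Your closing concern about the threshold is well founded: positivity of the radicand requires $e(x^{*})^{1-\alpha}<d$, which corresponds to the constant $d/e$ rather than $c/e$ (and the inequality direction reverses when $\alpha>1$, as in the paper's fitted value $\alpha=1.3$); the paper itself implicitly concedes this in the remark following the proposition, where the bound $x^{*}<\left(\frac{d}{e}\right)^{\frac{1}{1-\alpha}}$ is the one invoked to guarantee $s^{*}>0$.
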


\begin{proof}
	Let us assume that  $x(t)=x(t-\tau)=x^* >0$ and
	$s(t)=s^* >0$ denote the equilibrium point of (\ref{bio}), in particular,  $\dot{s}(t)=0$ and $\dot{x}(t)=0$. Whence, the second equation of (\ref{bio}) becomes
	\begin{equation*}
	0= c {(s^*)}^{\beta} {(x^*)}^{\alpha} -d {(x^*)}^{\alpha} + ex^*.
	\end{equation*}
	or equivalently
	\begin{equation*}
	0= c {(s^*)}^{\beta}  -d  + e{(x^*)}^{1-\alpha}.
	\end{equation*}
	Solving for $s^*$ in the last equation above we obtain
	\begin{equation*}
	s^{*}=\left(\frac{d- e(x^{*})^{1-\alpha}}{c} \right) ^{\frac{1}{\beta}}.
	\end{equation*}
	On the other hand,  by substituting the above expression for $s^*$ in the first equation of
	(\ref{bio}) we get
	\begin{align*}
	0&= -a (s^*)^{\beta} - b(x^*)^{\alpha} (s^*)^{\beta}   - D\left(s^*-s_0\right) \\
	&=    \left(-a - b\left(x^* \right)^{\alpha} \right) (s^*)^{\beta}-D\left(s^*-s_0\right)\\
	&=    \left(-a -b({x}^{*})^{\alpha} \right)  \left(  \left(\frac{d- e(x^{*})^{1-\alpha}}{c} \right) ^{\frac{1}{\beta}} \right)\sp\beta - D \left(  \left(\frac{d- e(x^{*})^{1-\alpha}}{c} \right) ^{\frac{1}{\beta}}- s_{0} \right).
	\end{align*}
	Hence, by solving the following equation for $x=x\sp\ast$ 
	\begin{equation*}
	\left(a +bx^{\alpha} \right)   \left( \frac{d
		- ex^{1-\alpha}}{c}
	\right)  +D \left(  \left( \frac{d- ex^{1-\alpha}}{c} \right) ^{\frac{1}{\beta}}- s_{0} \right)= 0,
	\end{equation*}
	\noindent
	we obtain the desired equilibrium point.
\end{proof}

We remark that in our setting, equation~(\ref{root}) does admit a positive  solution such that $s\sp\ast>0$. For completeness of our analysis, next we provide some algebraic conditions  for such solution to exist, namely, conditions for  the left hand side of equation~(\ref{root}) to change of sign for a couple of values of $x$. Indeed, if $x=\left(\frac{d}{e}\right)^{\frac{1}{1-\alpha}}$ then $d-ex^{1-\alpha}=0$ and so the left hand side of equation~(\ref{root}) reduces to $Ds_0$ which is positive. Similarly if $x=\left(\frac{d-cs_0^\beta}{e}\right)^{\frac{1}{1-\alpha}}$ then $\frac{d-ex^{1-\alpha}}{c}=s_0^\beta$ and so the left hand side of equation~(\ref{root}) reduces to $(-a-bx\sp\alpha)s_0^\beta$. Thus,  a condition for the left hand side of equation (\ref{root}) to be negative and hence for equation~(\ref{root}) to admit a nonzero solution is $-a-bx\sp\alpha<0$. In in addition one requires the inequality $x\sp\ast< \left(\frac{d}{e}\right)\sp\frac{1}{1-\alpha}$ one also guarantees that $s\sp\ast>0$. In what follows, we will assume that the equilibrium points $x\sp\ast$ and $s\sp\ast$ obtained from Proposition~\ref{Prop_EquiPoint} are both positive.

Using Proposition~\ref{Prop_EquiPoint}, the linearization of system (\ref{bio}) is of the form (\ref{GralLineal}), where 
\begin{equation*}
A_0=\begin{pmatrix}
\beta (-a-b x^{\alpha})s^{\beta-1}-D & 0\\
\beta d s^{\beta-1}x^{\alpha} & \alpha (c s^{\beta}-d)x^{\alpha-1}+e
\end{pmatrix}\Big{|}_{
	\begin{array}{l}
	z=z^*\\
	u=u^*
	\end{array}},
\end{equation*}
$A_1=\begin{pmatrix}
0 & -b\alpha s^{\beta} x^{\alpha-1}\\
0 & 0
\end{pmatrix}\Big{|}_{
	\begin{array}{l}
	z=z^*\\
	u=u^*
	\end{array}
}$, $B=(0\ 0)^{\intercal}$ and its quasi-polynomial is
\begin{align}
q(\lambda,\tau)=\text{det}\{\lambda I_2 -A_0-A_1{\rm{e}^{-\lambda\,\tau}} \}={\lambda}^{2}+\kappa_1\,\lambda+\kappa_2+\kappa_3\,{\rm{e}}^{-\lambda\,\tau},\label{qbio}
\end{align}
with
\begin{align*}
\kappa_1&=-\beta (-a-b (x^*)^{\alpha})(s^*)^{\beta-1}+D-\alpha \left(c (s^*)^{\beta}-d\right)(x^*)^{\alpha-1}-e\\
\kappa_2&=\left( \beta (-a-b (x^*)^{\alpha})(s^*)^{\beta-1}-D\right)\left(\alpha \left(c (s^*)^{\beta}-d\right)(x^*)^{\alpha-1}+e \right)\\ \kappa_3&=c\,b\,\alpha\,\beta\,(s^*)^{2\beta-1}\,(x^*)^{2\alpha-1}.
\end{align*}
\medskip
\noindent
The stability of system (\ref{bio}) is completely determined by the location of the roots of its corresponding characteristic quasi-polynomial (\ref{qbio}). D-partition me\-thod proposed by Neimark
in \cite{neimark1973d} determine stability conditions of a quasi-polynomial through study of the space of crossover frequencies $i\omega$-crossing delays. Below, we employ this method to the quasi-polynomial (\ref{qbio}). In addition, we will assume the the system is initially stable, that is, stable when $\tau=0$.
\medskip

\noindent
A stable quasi-polynomial loses  stability if some of its roots cross to the open right-half of  the complex plane. Clearly, the above occurs when the roots first cross the imaginary axis. For this, there are two possible cases: i) purely imaginary crossover window $\lambda=\pm i\omega$, where $0\neq\omega\in\erre^+$, ii)  crossover window on the origin $\lambda=0$. In both cases, $\lambda$ must be solution of quasi-polynomial. In general, the crossover window occur under variations of the parameters of a system or quasi-polynomial. A particular case and of great interest to the scientific community since it is closely related to bifurcation theory, it is to find the crossover windows when delay $\tau$ varies.  Next, an analysis of the quasi-polynomial (\ref{qbio}) using the mentioned above is performed.

\medskip
\noindent
Consider the change of variable $\lambda=0$ in the quasi-polynomial (\ref{qbio})
\begin{align*}
q(0,\tau)=&\kappa_2+\kappa_3=0.
\end{align*}
Clearly, we cannot obtain stability conditions from the previous equation
whence, efforts will be focused when $\lambda= i\omega$, $0\neq\omega\in\erre^+$, is solution of quasi-polynomial (\ref{qbio}), 
\begin{align*}
q(i\omega,\tau)
=&{(i\omega)}^{2}+\kappa_1\,i\omega+\kappa_2+\kappa_3\,{\rm{e}}^{-i\omega\,\tau}\notag\\
= & -{\omega}^{2}+\kappa_2 +\kappa_3\cos\left(\omega\tau\right)
+\left(\kappa_1\,\omega- \kappa_3\,\sin \left( \omega \tau  \right) \right)\,i=0.    
\end{align*}
Note that, $q(i\omega,\tau)=0$ iff  
\begin{align*}
\mathrm{Re}\{q(i\omega,\tau) \} &=  -{\omega}^{2}+\kappa_2 +\kappa_3\cos\left(\omega\tau\right)=0,\\
\mathrm{Im}\{q(i\omega,\tau) \} &= \kappa_1\,\omega- \kappa_3\,\sin \left( \omega \tau  \right)=0.
\end{align*}
For the above, we have 
$$\cos \left( \omega \tau  \right) = {\frac {{\omega}^{2}-\kappa_2}{\kappa_3 }},\quad \sin \left( \omega \tau  \right) = {\frac {\kappa_1\,\omega}{\kappa_3}}. $$
Therefore, the following is satisfied
\begin{align}
0 &=\sin^{2}(\omega \tau ) +\cos^{2}(\omega \tau ) - 1= \left(\frac {\kappa_1\,\omega}{\kappa_3}\right)^{2}+\left(\frac {{\omega}^{2}-\kappa_2}{\kappa_3 }\right)^2-1\notag\\
&= {\omega}^{4}+( \kappa_1^2-2\kappa_2){\omega}^{2}
+\kappa_2^2-\kappa_3^2=P(\omega).    \label{polbioo}
\end{align}
Thus, the quasi-polynomial (\ref{qbio}) has roots $ \lambda_{0} = \pm i \omega_{0}$, if 
$ \omega_{0}$ is solution of the polynomial $P(\omega_{0})$ given in (\ref{polbioo}). Moreover, 
the delay value where the above occurs is
\begin{equation}\label{tau0}
{ \tau  }_{ 0 }=\frac { 1 }{ { \omega_0  } } {\tan }^{ -1 }\left(\frac {\kappa_1\omega_0 }{ \omega_0^2-\kappa_2} \right) + \dfrac{n\pi}{\omega_0};\ n=0,\pm 1, \pm2, \ldots ,
\end{equation}
Next, we postulate in the following criteria the stipulated above.
\begin{proposition}\label{Prop_Cross}
	The quasi-polynomial (\ref{qbio}) has crossover windows $\lambda_{0} = \pm i \omega_{0}$, if there exists $\omega_0>0$ such that $P(\omega_{0})=0$ given in (\ref{polbioo}).
	In addition, the values of the delay $\tau$ where the above occurs satisfy the equation given in
	(\ref{tau0}).
\end{proposition}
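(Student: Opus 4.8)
The computation displayed just before the statement already carries out the substantive work for one direction: it shows that if $i\omega_0$ (with $\omega_0 > 0$) is a root of the quasi-polynomial (\ref{qbio}), then $\omega_0$ must satisfy $P(\omega_0) = 0$. The plan is therefore to record this derivation and then run it in reverse to obtain the converse implication actually asserted in the proposition, namely that a positive root $\omega_0$ of $P$ produces, for a suitable delay, a purely imaginary root of (\ref{qbio}). As noted in the text, the case $\lambda = 0$ yields only $\kappa_2 + \kappa_3 = 0$ and carries no stability information, so I would restrict to $\omega_0 \neq 0$ throughout.

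First I would substitute $\lambda = i\omega_0$ into (\ref{qbio}), expand $\mathrm{e}^{-i\omega_0\tau} = \cos(\omega_0\tau) - i\sin(\omega_0\tau)$, and split $q(i\omega_0,\tau) = 0$ into its real and imaginary parts, giving
\begin{align*}
\cos(\omega_0\tau) &= \frac{\omega_0^2 - \kappa_2}{\kappa_3}, & \sin(\omega_0\tau) &= \frac{\kappa_1\omega_0}{\kappa_3}.
\end{align*}
Squaring and adding, the identity $\sin^2 + \cos^2 = 1$ eliminates $\tau$ and reduces exactly to $P(\omega_0) = 0$ from (\ref{polbioo}); this is the forward direction. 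For the converse, the crucial observation is that $P(\omega_0) = 0$ is precisely the assertion that the point $\bigl((\omega_0^2 - \kappa_2)/\kappa_3,\ \kappa_1\omega_0/\kappa_3\bigr)$ lies on the unit circle. Hence there is a unique angle $\theta \in [0, 2\pi)$ realizing both equations simultaneously, and setting $\omega_0\tau \equiv \theta \pmod{2\pi}$ makes $i\omega_0$ a genuine root. Taking the ratio of the two equations gives $\tan(\omega_0\tau) = \kappa_1\omega_0/(\omega_0^2 - \kappa_2)$, and inverting produces the family (\ref{tau0}) indexed by the integer $n$.

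The main obstacle is not the algebra but the branch selection in this inversion. Since $\tan^{-1}$ returns values only in $(-\pi/2, \pi/2)$ and the tangent has period $\pi$, the formula (\ref{tau0}) recovers both the correct angle $\theta$ and its antipode $\theta + \pi$; the antipode flips the signs of both $\cos(\omega_0\tau)$ and $\sin(\omega_0\tau)$ and therefore does \emph{not} satisfy the two displayed equations. Consequently (\ref{tau0}) should be understood as a candidate set, from which one retains only those $n$ for which the reconstructed cosine and sine carry the signs dictated by the real and imaginary parts above. I would also flag the standing solvability requirements $\kappa_3 \neq 0$ and $|\omega_0^2 - \kappa_2| \le |\kappa_3|$, which are automatically guaranteed once $P(\omega_0) = 0$ holds, so that the trigonometric equations are genuinely consistent.
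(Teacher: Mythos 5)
Your proof is correct and follows essentially the same route as the paper: the paper's own justification is precisely the D-partition computation displayed before the statement (substituting $\lambda=i\omega$ into (\ref{qbio}), separating real and imaginary parts, and eliminating $\tau$ via $\sin^2+\cos^2=1$), after which the proposition merely records the result. Your two refinements --- the explicit unit-circle argument establishing the converse direction actually asserted, and the caveat that (\ref{tau0}) is only a candidate set whose antipodal ($\theta+\pi$) branches must be discarded by a sign check --- are more careful than the paper itself, which implicitly concedes the latter point when it later refers to the delays computed from (\ref{tau0}) as ``candidate values'' of the critical delay.
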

The previous result provides conditions of the delay $\tau_0$ for which the quasi-polynomial (\ref{qbio}) has roots on the imaginary axis $\lambda_{0} = \pm i \omega_{0}$. If these roots are dominant, then it is natural to determine when the roots move to the right half-plane under infinitesimal variations of this delay $\tau_0$. The above is know as direction of the crossing \cite{walton1987direct} and it is determined using the following result.
\begin{proposition}\label{Prop_sign}
	The mathematical model given in (\ref{bio}) has a Hopf bifurcation  at $\tau=\tau_0$ if 
	\begin{equation}\label{sign}
	\mathrm{sign}\Bigl\{\mathrm{Re}\Bigl\{ \frac{\partial \lambda}{\partial\tau}\Big{|}_{\lambda=i\omega_0}\Bigl\} \Bigl\}=\mathrm{sign}\left\{ {\kappa_1}^{2}-2\,\kappa_2 \right\}>0.
	\end{equation}
	Here, $\tau_0$ is given in (\ref{tau0}).
\end{proposition}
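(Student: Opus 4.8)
The plan is to verify the transversality (crossing) condition demanded by the Hopf bifurcation theorem for retarded functional differential equations: that the purely imaginary roots $\lambda=\pm i\omega_0$ of the quasi-polynomial (\ref{qbio}), whose existence at $\tau=\tau_0$ is guaranteed by Proposition~\ref{Prop_Cross}, cross the imaginary axis with strictly positive speed as $\tau$ increases through $\tau_0$. First I would regard $\lambda$ as a smooth implicit function $\lambda(\tau)$ near $\tau_0$, which the implicit function theorem supplies provided $\partial q/\partial\lambda\neq 0$ at the crossing; differentiating the identity $q(\lambda(\tau),\tau)=0$ with respect to $\tau$ gives $\frac{\partial q}{\partial\lambda}\frac{d\lambda}{d\tau}+\frac{\partial q}{\partial\tau}=0$, and hence $\frac{d\lambda}{d\tau}=-\big(\partial q/\partial\tau\big)\big/\big(\partial q/\partial\lambda\big)$.

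Rather than manipulate $d\lambda/d\tau$ directly, I would work with its reciprocal, exploiting the elementary fact that $\mathrm{sign}\,\mathrm{Re}\{d\lambda/d\tau\}=\mathrm{sign}\,\mathrm{Re}\{(d\lambda/d\tau)^{-1}\}$ for any nonzero complex number. Using $\partial q/\partial\lambda=2\lambda+\kappa_1-\kappa_3\tau\,{\rm e}^{-\lambda\tau}$ and $\partial q/\partial\tau=-\kappa_3\lambda\,{\rm e}^{-\lambda\tau}$, and crucially eliminating the transcendental term through the on-axis relation $\kappa_3\,{\rm e}^{-\lambda\tau}=-(\lambda^2+\kappa_1\lambda+\kappa_2)$ coming from $q=0$, I obtain the rational expression
$$\left(\frac{d\lambda}{d\tau}\right)^{-1}=-\frac{2\lambda+\kappa_1}{\lambda\,(\lambda^2+\kappa_1\lambda+\kappa_2)}-\frac{\tau}{\lambda}.$$
Setting $\lambda=i\omega_0$, the last summand is purely imaginary and drops out of the real part; a short computation of the remaining quotient, using $\lambda^2+\kappa_1\lambda+\kappa_2=(\kappa_2-\omega_0^2)+i\kappa_1\omega_0$, yields after the factor $\omega_0^2$ cancels
$$\mathrm{Re}\left\{\left(\frac{d\lambda}{d\tau}\right)^{-1}\right\}_{\lambda=i\omega_0}=\frac{2\omega_0^2+\kappa_1^2-2\kappa_2}{\big|(\kappa_2-\omega_0^2)+i\kappa_1\omega_0\big|^2}.$$
I would then recognize the numerator as the derivative of the quartic $P$ in (\ref{polbioo}), viewed as a quadratic in the variable $\omega^2$, evaluated at $\omega_0^2$, so that the exact transversality sign equals $\mathrm{sign}\{2\omega_0^2+\kappa_1^2-2\kappa_2\}$.

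To reach the asserted equality (\ref{sign}) I would finally invoke the hypothesis $\kappa_1^2-2\kappa_2>0$: since $\omega_0^2>0$, the numerator $2\omega_0^2+\kappa_1^2-2\kappa_2$ is then strictly positive, and its sign coincides with $\mathrm{sign}\{\kappa_1^2-2\kappa_2\}>0$. Combined with the simplicity of the root $i\omega_0$ (encoded in a nonvanishing denominator) and the assumption, recorded earlier in the excerpt, that the system is stable at $\tau=0$ with no further eigenvalues on the imaginary axis at $\tau_0$, the Hopf bifurcation theorem then applies and produces the branch of periodic solutions, establishing the claim.

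The main obstacle I anticipate is not the Hopf machinery but the reconciliation between the \emph{exact} transversality sign $\mathrm{sign}\{2\omega_0^2+\kappa_1^2-2\kappa_2\}$ and the sign $\mathrm{sign}\{\kappa_1^2-2\kappa_2\}$ appearing in (\ref{sign}): the two agree precisely because the extra term $2\omega_0^2$ is positive, which forces both to be positive exactly under the stated assumption $\kappa_1^2-2\kappa_2>0$. Care must therefore be taken to present the result as a \emph{sufficient} condition and to make explicit why the $2\omega_0^2$ contribution may be absorbed under this hypothesis, so that the proof does not overstate an equality of signs that would fail when $\kappa_1^2-2\kappa_2<0$.
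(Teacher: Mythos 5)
Your proposal is correct and follows essentially the same route as the paper's own proof: the same reciprocal-derivative device $\mathrm{sign}\,\mathrm{Re}\{d\lambda/d\tau\}=\mathrm{sign}\,\mathrm{Re}\{(d\lambda/d\tau)^{-1}\}$, the same elimination of ${\rm e}^{-\lambda\tau}$ through the on-axis relation $q=0$, and the same evaluation at $\lambda=i\omega_0$ producing a real part with numerator $2\omega_0^2+\kappa_1^2-2\kappa_2$ over a positive denominator. You are in fact more careful than the paper at the final step, where the paper passes silently from this numerator to $\mathrm{sign}\{\kappa_1^2-2\kappa_2\}$; your observation that the two signs agree only because the hypothesis $\kappa_1^2-2\kappa_2>0$ in (\ref{sign}) forces both quantities to be positive (since $\omega_0^2>0$) makes explicit a reconciliation the paper leaves implicit.
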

\begin{proof}
	The technique will need to check for each root on 
	the imaginary axis $\lambda_{0} = \pm i \omega_{0}$
	crosses to the left or to the right of the complex plane
	when increasing $\tau$. This is determined by the sign of
	$\text{Re}\{\partial \lambda/\partial\tau\}$. Consider the quasi-polynomial given in (\ref{qbio}) 
	and suppose that $\lambda=\lambda(\tau)$, then
	$$\frac{\partial \lambda}{\partial \tau} = \frac{ \lambda\kappa_3 }{ \left( 2\lambda+\kappa_1 \right){\rm{e}}^{\lambda\,\tau} -\tau\kappa_3  },$$
	which implies that
	\begin{align*}
	\left( \frac{\partial \lambda}{\partial \tau} \right)^{-1}&=\left(\frac{2\lambda+\kappa_1}{\lambda\kappa_3} \right){\rm{e}}^{\lambda\,\tau}-  \frac{\tau\kappa_3}{\lambda\kappa_3}\\
	&=\left(\frac{2\lambda+\kappa_1}{\lambda\kappa_3} \right)\left(
	-\frac{\kappa_3}{{\lambda}^{2}+\kappa_1\,\lambda+\kappa_2}
	\right)- \frac{\tau}{\lambda}\\
	&=-{\frac {\kappa_1+2\,\lambda}{\lambda\, \left( \kappa_1\,\lambda+{
				\lambda}^{2}+\kappa_2 \right) }}-{\frac {\tau}{\lambda}}.
	\end{align*}
	Therefore, for $\lambda=i \omega_0$ yields
	\begin{align*}
	\left( \frac{\partial \lambda}{\partial \tau} \right)^{-1}\Big{|}_{\lambda=i\omega_0}
	=&{\frac {2\,{\omega_{0}}^{2}+{\kappa_1}^{2}-2\,\kappa_2}{{\omega_{0}}^{4}+
			\left( {\kappa_1}^{2}-2\,\kappa_2 \right) {\omega_{0}}^{2}+{\kappa_2}^{2}}}\\
	&+{\frac { \left( \tau\,{\omega_{0}}^{4}+ \left( {\kappa_1}^{2}\tau-2\,\tau
			\,\kappa_2+\kappa_1 \right) {\omega_{0}}^{2}+\kappa_2\, \left( \tau\,\kappa_2+
			\kappa_1 \right)  \right) }{\omega_{0}\, \left( {\omega_{0}}^{4}+ \left( {
				\kappa_1}^{2}+2\,\kappa_2 \right) {\omega_{0}}^{2}-{\kappa_2}^{2} \right) }}i.
	\end{align*}
	Thus, for $\omega_{0}>0$ we get 
	\begin{align*}
	\text{sign}\Bigl\{\text{Re}\Bigl\{ \frac{\partial \lambda}{\partial\tau}\Big{|}_{\lambda=i\omega_{0}}\Bigl\} \Bigl\}&=\text{sign}\Bigl\{\text{Re}\Bigl\{ \left(\frac{\partial \lambda}{\partial\tau}\right)^{-1}\Big{|}_{\lambda=i\omega_0}\Bigl\} \Bigl\}\\
	&= \text{sign}\left\{ {\kappa_1}^{2}-2\,\kappa_2 \right\}. 
	\end{align*}
	This ends the proof.
\end{proof}
Note that the sign given in (\ref{sign}) is independent of $\tau_0$, therefore if the linearization of system (\ref{bio}) is stable for $\tau=0$, then it remains stable for all $\tau\in(0,\tau_0)$, where $\tau_0$ is the first value of (\ref{tau0}). Thus the following result is evident.
\begin{proposition}\label{Prop_stability}
	The quasi-polynomial (\ref{qbio}) given in (\ref{bio}) is stable for all $\tau\in(0,\tau_0)$ and unstable for all $\tau>\tau_0$, where $\tau_0$ is the first value such that satisfies the equation given in (\ref{tau0}).
\end{proposition}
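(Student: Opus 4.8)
The plan is to exploit the continuous dependence on $\tau$ of the roots of the retarded quasi-polynomial $q(\lambda,\tau)$ in (\ref{qbio}), together with the crossing analysis already established. Since $q$ is of retarded type (the undelayed term $\lambda^2$ carries the highest power of $\lambda$), in any half-plane $\{\mathrm{Re}\,\lambda\geq -\gamma\}$ it has only finitely many roots, and these vary continuously with $\tau$ by Rouch\'e's theorem. Stability of the quasi-polynomial means that all its roots lie in the open left half-plane, so I would track the integer $N(\tau)$ counting the roots with nonnegative real part and use the fact that $N(\tau)$ can only change when a root lies on the imaginary axis.

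First I would fix the base case. By the standing assumption the system is stable at $\tau=0$, so $N(0)=0$; equivalently $q(\lambda,0)=\lambda^2+\kappa_1\lambda+(\kappa_2+\kappa_3)$ is Hurwitz, which in particular forces $\kappa_2+\kappa_3>0$. Because $q(0,\tau)=\kappa_2+\kappa_3$ is independent of $\tau$, the value $\lambda=0$ is never a root, so the only way a root can reach the imaginary axis is through a purely imaginary pair $\lambda=\pm i\omega_0$. By Proposition~\ref{Prop_Cross} this requires $P(\omega_0)=0$ and occurs exactly at the delays listed in (\ref{tau0}). Hence, on the open interval $(0,\tau_0)$, where $\tau_0$ is the smallest positive delay produced by (\ref{tau0}), no root touches the imaginary axis; by continuity $N(\tau)$ stays equal to $N(0)=0$, which proves stability for every $\tau\in(0,\tau_0)$.

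For the instability claim I would appeal to the direction of crossing. At $\tau=\tau_0$ a conjugate pair $\pm i\omega_0$ sits on the imaginary axis, and Proposition~\ref{Prop_sign} gives $\mathrm{sign}\{\mathrm{Re}\{\partial\lambda/\partial\tau\}\}=\mathrm{sign}\{\kappa_1^2-2\kappa_2\}>0$, so the pair crosses from left to right with nonzero speed and $N(\tau_0^{+})\geq 2$. To push instability to all $\tau>\tau_0$ I would invoke the remark preceding the statement: the crossing sign depends only on $\kappa_1^2-2\kappa_2$ and hence is the same at every delay of (\ref{tau0}), independently of the chosen $\omega_0$ or of the index $n$. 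Every crossing is therefore rightward, $N(\tau)$ is a nondecreasing step function that jumps upward at each crossing, and once it has left $0$ at $\tau_0$ it can never return to $0$. Thus $N(\tau)\geq 2>0$ for all $\tau>\tau_0$ and the quasi-polynomial is unstable there.

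The main obstacle is this last step, namely ruling out a stability switch back for large $\tau$. This is precisely where the $\tau$-independence of the crossing sign is indispensable: if crossings could occur in both directions, $N(\tau)$ could decrease and stability might be regained, so the conclusion would fail. I would therefore argue carefully that every positive root $\omega_0$ of the quartic $P$ yields a rightward crossing (the sign formula in (\ref{sign}) contains no $\omega_0$) and that $N(\tau)$ is genuinely constant between consecutive crossing delays, so that the monotone, strictly positive accumulation of right half-plane roots beyond $\tau_0$ is rigorous rather than merely heuristic.
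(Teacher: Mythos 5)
Your proposal is correct and follows essentially the same route as the paper: the paper's own proof is a one-line appeal to Propositions~\ref{Prop_Cross} and \ref{Prop_sign}, together with the remark preceding the statement that the crossing sign in (\ref{sign}) is independent of $\tau_0$, which is exactly the argument you spell out. Your version simply makes rigorous the implicit root-counting and continuity steps (retarded type, $N(\tau)$ constant between crossings, $\lambda=0$ excluded since $q(0,\tau)=\kappa_2+\kappa_3>0$, all crossings rightward), which the paper leaves as ``evident.''
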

\begin{proof}
	The result follows using Propositions~\ref{Prop_Cross} and \ref{Prop_sign}. 
\end{proof}

\subsection{Analysis of mathematical model with delayed controller}

In this section, we carry out a stability analysis of the fractional mathematical model (\ref{MatModelBio_gral}) in closed-loop with a delayed controller of the type (\ref{GralControl}). As well as tuning the controller gains.
\begin{proposition}\label{PropEqui2}
	Let $x\sp\ast$ be a positive real number such that
	\begin{equation*}
	0<s\sp\ast =\left(\frac{d-e\,(x\sp\ast)^{1-\alpha}}{c} \right)^{\frac{1}{\beta}} < s_0 \hspace*{0.5cm}
	\text{ and }
	\hspace*{0.5cm}
	a+b(x^\ast)\sp\alpha \leq\frac{s_0-s\sp\ast}{(s\sp\ast)^\beta}.
	\end{equation*} 
    If
	\begin{equation*}
	u\sp\ast= \frac{\left(a+b(x\sp\ast)\sp\alpha\right)(s\sp\ast)\sp\beta}{s_0-s\sp\ast},
	\end{equation*}
	then $z\sp\ast=(s^*,x^*)\sp{\intercal}$ and $u\sp\ast$ are the equilibrium point of the fractional mathematical model (\ref{MatModelBio_gral}).
\end{proposition}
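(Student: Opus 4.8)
The plan is to mirror the computation in the proof of Proposition~\ref{Prop_EquiPoint}, but now treating $u$ as an unknown to be solved for rather than as a fixed constant $D$. Concretely, I would impose the equilibrium conditions $\dot s(t)=0$ and $\dot x(t)=0$ on the full model~(\ref{MatModelBio_gral}) at the candidate point, setting $s(t)=s^*$ and $x(t)=x(t-\tau)=x^*$ together with $u(t)=u^*$, and then verify that the two resulting algebraic equations are satisfied by the prescribed $s^*$ and $u^*$.

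First I would work with the second equation of~(\ref{MatModelBio_gral}), which at equilibrium reads $0=c(s^*)^\beta(x^*)^\alpha-d(x^*)^\alpha+ex^*$. Since $x^*>0$ I may divide by $(x^*)^\alpha$ to obtain $c(s^*)^\beta = d-e(x^*)^{1-\alpha}$, and solving for $s^*$ recovers exactly the stated expression $s^*=\bigl((d-e(x^*)^{1-\alpha})/c\bigr)^{1/\beta}$. The first hypothesis, $s^*>0$, is precisely what guarantees that the quantity inside the $\beta$-th root is positive, so that $s^*$ is a well-defined positive real number; this is the step where the admissibility of $x^*$ enters. Note that the second differential equation carries no control term, so it alone pins down the relation between $s^*$ and $x^*$ independently of $u^*$.

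Next I would substitute this $s^*$ into the first equation of~(\ref{MatModelBio_gral}) at equilibrium, $0=-a(s^*)^\beta-b(s^*)^\beta(x^*)^\alpha+(s_0-s^*)u^*$. Factoring $(s^*)^\beta$ out of the first two terms and solving for $u^*$ gives $u^*=(a+b(x^*)^\alpha)(s^*)^\beta/(s_0-s^*)$, which is the asserted value; here the hypothesis $s^*<s_0$ is what makes the denominator strictly positive so the division is legitimate. This confirms that $(s^*,x^*)^\intercal$ together with $u^*$ annihilate both right-hand sides, hence form an equilibrium of the model.

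The only point requiring genuine care, and the place where the second inequality hypothesis is used, is checking that the resulting $u^*$ is an admissible control value, i.e.\ $u^*\in[0,1]$, since the model only permits $u(t)\in[0,1]$. Positivity $u^*>0$ is immediate from $a,b,s^*,x^*>0$ and $s_0-s^*>0$. For the upper bound I would rearrange $u^*\le 1$ into $(a+b(x^*)^\alpha)(s^*)^\beta\le s_0-s^*$, which upon dividing by $(s^*)^\beta>0$ is exactly the stipulated inequality $a+b(x^*)^\alpha\le (s_0-s^*)/(s^*)^\beta$. Thus the second hypothesis is equivalent to the admissibility constraint $u^*\le 1$, and no further estimate is needed. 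I expect this verification of admissibility, rather than the equilibrium algebra itself, to be the main substantive content of the argument.
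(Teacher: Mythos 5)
Your proposal is correct and follows essentially the same route as the paper's own proof: solve the second (control-free) equation for $s^*$, substitute into the first equation to solve for $u^*$, and use the hypotheses to ensure admissibility. In fact, your explicit verification that the inequality $a+b(x^*)^\alpha \le (s_0-s^*)/(s^*)^\beta$ is precisely equivalent to $u^*\le 1$ spells out a step the paper compresses into the single sentence ``The hypothesis imply that $0\leq u^*\leq 1$.''
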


\begin{proof}
	Let us assume that $0<s(t)=s\sp\ast<s_0$, $0<x(t)=x\sp\ast$ and $0<u(t)=u\sp\ast$ denote the equilibrium point of the  model (\ref{MatModelBio_gral}). Then the second equation of (\ref{MatModelBio_gral}) becomes
	\begin{equation*}
	c(s\sp\ast)^\beta(x\sp\ast)^\alpha -d(x\sp\ast)^\alpha + e x\sp\ast =0,
	\end{equation*}
	\noindent
	and solving for $s\sp\ast$ we obtain
	\begin{align*}
	s\sp\ast =\left(\frac{d-e\,(x\sp\ast)^{1-\alpha}}{c} \right)^{\frac{1}{\beta}}.
	\end{align*}
	Now, the first  equation of (\ref{MatModelBio_gral}) we obtain
	\begin{align*}
	u\sp\ast= \frac{\left(-a-b(x\sp\ast)\sp\alpha\right)(s\sp\ast)\sp\beta}{s\sp\ast-s_0}.
	\end{align*}
	The hypothesis imply that $0\leq u\sp\ast\leq 1$.
	Thus for the fixed positive value $x\sp\ast$, and the value of $s=s\sp\ast$ satisfying the given inequalities, the value of  $u=u\sp\ast$ is obtained so that they are equilibrium point, as was to be shown.
\end{proof}

\medskip
Using Proposition~\ref{PropEqui2}, the linearization of model (\ref{MatModelBio_gral}) is of the form (\ref{GralLineal}), where
\begin{equation} \label{Matri_LinealControl}
\begin{split}
A_0=
\begin{pmatrix}
\beta(-a-bx\sp\alpha)s^{\beta-1}-u
& 0\\
\beta cs^{\beta-1}x\sp\alpha &
\alpha\left( cs\sp\beta-d \right)x^{\alpha-1}+e
\end{pmatrix}\Big{|}_{
	\begin{array}{l}
	z=z^*\\
	u=u^*
	\end{array}},\\
A_1=\begin{pmatrix}
0 & -\alpha\ b\ s^{\beta}x^{\alpha-1}\\
0 & 0
\end{pmatrix}\Big{|}_{
	\begin{array}{l}
	z=z^*\\
	u=u^*
	\end{array}},\ \text{and}\
B=\begin{pmatrix}
s_0-s\\ 0
\end{pmatrix}\Big{|}_{
	\begin{array}{l}
	z=z^*\\
	u=u^*
	\end{array}}.
\end{split}
\end{equation}
Next, we propose the following delayed controller to stabilize the model
\begin{equation}\label{ControlPR}
u(t)=Kz(t-h)=(0,\ k_{r})\begin{pmatrix}
s(t-h)\\
x(t-h)
\end{pmatrix},
\end{equation}
where $k_{r}\in\erre$ is the controller gain and $h\in\erre^{+}$ is the controller delay. 
Note that in  (\ref{ControlPR}) the  proposed  controller is only acting on the biomass $x$ and not on the substrate $s$. We base our proposal in the fact that,  during the experiments, we only have access to the measurement of $x$ and not of $s$.
In addition, the time-delay in the controller is justified by the  time-delay in between the recordings of  the biomass output.

\medskip
Let $A_0,\ A_1\in\erre^{2\times2}$ and $B\in\erre^2$ given in (\ref{Matri_LinealControl}), then the quasi-polynomial of closed-loop systems (\ref{GralLineal}) with (\ref{ControlPR}) is 
\begin{align}\label{quasi_control}
q(\lambda,\tau,h)&=\text{det}\{\lambda I_2-A_0-A_1{\rm e}^{-\tau\lambda}-BK{\rm e}^{-h\lambda} \}\notag\\
&=\lambda^2+\eta_1\lambda+\eta_2+\eta_3{\rm e}^{-\tau\lambda}+\eta_4\,k_{r}\, {\rm e}^{-h\lambda},
\end{align}
where 
\begin{equation}\label{etas}
\begin{split}
\eta_1&=-\beta(-a-b(x^*)\sp\alpha)(s^*)^{\beta-1}+u^*-\alpha\left( c(s^*)\sp\beta-d \right)(x^*)^{\alpha-1}-e,\\
\eta_2&=\left( \beta(-a-b(x^*)\sp\alpha)(s^*)^{\beta-1}-u^*\right)\left( \alpha\left( c(s^*)\sp\beta-d \right)(x^*)^{\alpha-1}+e\right),\\ 
\eta_3&=c\ b\ \alpha\ \beta\  (s^*)^{2\beta-1}(x^*)^{2\alpha-1},\\
\eta_4&=\left( \beta c(s^*)^{\beta-1}(x^*)\sp\alpha \right)\left(s_0-s^* \right).    
\end{split}    
\end{equation}

\subsubsection{Tuning of the delayed controller }

Here again we use the D-partition method proposed by Neimark \cite{neimark1973d} and the tuning method proposed in \cite{villafuerte2018tuning}. Consider the quasi-polynomial given in (\ref{quasi_control}) and the chance of variable $\lambda=\lambda-\sigma$, for a given $\sigma>0$, i.e.
\begin{align}\label{quasi_control_sigma}
q_{\sigma}(\lambda,\tau,h)&=q(\lambda-\sigma,\tau,h)\\
&=(\lambda-\sigma)^2+\eta_1(\lambda-\sigma)+\eta_2+\eta_3{\rm e}^{-\tau(\lambda-\sigma)}+\eta_4\,k_{r}\, {\rm e}^{-h(\lambda-\sigma)}=0.\notag
\end{align}
If $\lambda=0$ note that the above quasi-polynomial is 
\begin{align*}
q_{\sigma}(0,\tau,h)&=\sigma^2-\eta_1\sigma+\eta_2+\eta_3{\rm e}^{\tau\sigma}+\eta_4\,k_{r}\, {\rm e}^{h\sigma}=0,
\end{align*}
therefore
\begin{equation*}
k_{r}=\frac{\sigma^2-\eta_1\sigma+\eta_2+\eta_3{\rm e}^{\tau\sigma}}{\eta_4\,{\rm e}^{h\sigma}}.
\end{equation*}
If $\lambda=i\omega$ note that the quasi-polynomial (\ref{quasi_control_sigma}) is 
\begin{align*}
q_{\sigma}(i\omega,\tau,h)&=(i\omega-\sigma)^2+\eta_1(i\omega-\sigma)+\eta_2+\eta_3{\rm e}^{-\tau(i\omega-\sigma)}+\eta_4\,k_{r}\, {\rm e}^{-h(i\omega-\sigma)}
=0,
\end{align*}
the previous equality is satisfied if
\begin{align}
\text{Re}\{ q_{\sigma}(i\omega,\tau,h)\}&=\Phi+k_{r}\,\eta_4\cos(h\omega){\rm e}^{h\sigma}=0,\label{Eq_Re}\\
\text{Im}\{ q_{\sigma}(i\omega,\tau,h)\}&=\Theta-k_{r}\,\eta_4\sin(h\omega) {\rm e}^{h\sigma}=0,\label{Eq_Im}
\end{align}
where 
\begin{equation}
\begin{split}\label{PhiTheta}
\Phi&=\sigma^2-\omega^2-\eta_1\sigma+\eta_2+\eta_3\cos(\tau\omega){\rm e}^{\tau\sigma}\ \text{and}\\ \Theta&= \eta_1\omega-2\omega\sigma-\eta_3\sin(\tau\omega){\rm e}^{\tau\sigma}.    
\end{split}    
\end{equation}
Solving for $k_r$ of (\ref{Eq_Im}) we obtain
\begin{align*}
k_{r}=\frac{\Theta}{\eta_4\sin(h\omega) {\rm e}^{h\sigma}}.
\end{align*}
By substituting the above equation in (\ref{Eq_Re}) we get
\begin{align*}
0&=\Phi+\left( \frac{\Theta}{\eta_4\sin(h\omega) {\rm e}^{h\sigma}}\right)\eta_4\cos(h\omega){\rm e}^{h\sigma}.
%
\end{align*}
Solving $h$ of above equation
\begin{align*}
h=\frac{1}{\omega}\cot^{-1}\left( -\frac{\Phi}{\Theta}  \right).
\end{align*}
Next, we postulate a result for tuning the delayed controller (\ref{ControlPR}) using the above.
\begin{proposition}\label{Prop_Regions}
	Consider the quasi-polynomial (\ref{quasi_control})
	and let $\sigma>0$ be given. Then the $\sigma$-stability regions of the quasi-polynomial (7) on the parametric plane $h-k_r$ are delimited by the following equations
	\begin{equation*}
	k_{r}:=k_r(h)=\frac{\sigma^2-\eta_1\sigma+\eta_2+\eta_3{\rm e}^{\tau\sigma}}{\eta_4\,{\rm e}^{h\sigma}},\quad\text{when}\ \lambda=0;
	\end{equation*}
	and when $\lambda=i\omega$, $\omega>0$,
	\begin{equation*}
	\begin{split}
	\hat{h}&:=h(\omega,\sigma)=\frac{1}{\omega}\cot^{-1}\left( -\frac{\Phi}{\Theta}  \right)+\frac{n\pi}{\omega},\ n=0,\pm 1,\pm 2,\dots;\\
	\hat{k}_{r}&:=k_r(\hat{h},\omega,\sigma)=\frac{\Theta}{\eta_4\sin(\hat{h}\omega) {\rm e}^{\hat{h}\sigma}},
	\end{split}
	\end{equation*}
	where $\eta_j$, $j=1,\dots,4$ are given in (\ref{etas}) and $\Theta$, $\Phi$ in (\ref{PhiTheta}).
\end{proposition}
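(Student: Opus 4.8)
The plan is to apply the D-partition (Neimark) method directly to the $\sigma$-shifted quasi-polynomial $q_{\sigma}(\lambda,\tau,h)=q(\lambda-\sigma,\tau,h)$ introduced in (\ref{quasi_control_sigma}). By definition the closed-loop system is $\sigma$-stable precisely when every root of $q$ has real part strictly less than $-\sigma$, equivalently when every root of $q_{\sigma}$ lies in the open left half-plane. Since the roots of $q_{\sigma}$ depend continuously on the parameters $(h,k_r)$, a root can only migrate from the left to the right half-plane by crossing the imaginary axis; hence the boundary of any $\sigma$-stability region in the $h$–$k_r$ plane must consist of those parameter values for which $q_{\sigma}$ admits a purely imaginary root $\lambda=i\omega$ with $\omega\ge 0$. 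The proof therefore reduces to computing these crossing sets in the two cases $\lambda=0$ and $\lambda=i\omega$ with $\omega>0$.

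For the first case I would substitute $\lambda=0$ into (\ref{quasi_control_sigma}), obtaining $\sigma^2-\eta_1\sigma+\eta_2+\eta_3 e^{\tau\sigma}+\eta_4 k_r e^{h\sigma}=0$, and solve this linear relation for $k_r$. This is legitimate because $\eta_4\ne 0$, which follows from $s_0>s^\ast$ together with $c,\beta>0$ in (\ref{etas}); it yields directly the first displayed formula $k_r=k_r(h)$.

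For the second case I would set $\lambda=i\omega$, expand $(i\omega-\sigma)^2=\sigma^2-\omega^2-2i\sigma\omega$ and $e^{-\tau(i\omega-\sigma)}=e^{\tau\sigma}\big(\cos(\tau\omega)-i\sin(\tau\omega)\big)$ (and likewise for the $h$-term), and separate real and imaginary parts to arrive at the two real equations (\ref{Eq_Re})–(\ref{Eq_Im}) with $\Phi,\Theta$ as in (\ref{PhiTheta}). Treating $\omega,\sigma,\tau$ as fixed, this is a $2\times 2$ system in the unknowns $k_r$ and $h$: I would solve (\ref{Eq_Im}) for $k_r=\Theta/(\eta_4\sin(h\omega)e^{h\sigma})$, substitute into (\ref{Eq_Re}) to eliminate $k_r$, and reduce the remaining relation to $\cot(h\omega)=-\Phi/\Theta$, giving $\hat h=\frac{1}{\omega}\cot^{-1}(-\Phi/\Theta)$. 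Re-inserting $\hat h$ into the expression for $k_r$ then produces $\hat k_r$, completing the parametrization.

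The step I expect to require the most care is not the algebra but the bookkeeping of branches and degeneracies. Because $\cot^{-1}$ is multivalued, the full crossing set is recovered only by appending the family $n\pi/\omega$, $n\in\mathbb{Z}$, which is exactly the origin of that term in $\hat h$. I would also have to record that the formula for $\hat k_r$ presupposes $\sin(\hat h\omega)\ne 0$ and $\Theta\ne 0$, so the degenerate frequencies where these vanish must be excluded or matched against the $\lambda=0$ branch, while $\eta_4\ne 0$ throughout guarantees each division is well defined. Finally, to justify that these curves genuinely \emph{delimit} the $\sigma$-stability regions rather than merely touch them, I would appeal to the continuity-of-roots argument above, checking (in the spirit of Proposition~\ref{Prop_sign}) that the count of roots with real part exceeding $-\sigma$ actually changes as $(h,k_r)$ crosses each branch.
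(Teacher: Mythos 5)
Your proposal follows essentially the same route as the paper: the D-partition (Neimark) method applied to the $\sigma$-shifted quasi-polynomial $q_{\sigma}(\lambda,\tau,h)=q(\lambda-\sigma,\tau,h)$, treating the two cases $\lambda=0$ and $\lambda=i\omega$, solving (\ref{Eq_Im}) for $k_r$ and eliminating it in (\ref{Eq_Re}) to reach $\cot(h\omega)=-\Phi/\Theta$ and hence $\hat h$ and $\hat k_r$. If anything, you supply slightly more than the paper, which states the proposition immediately after this computation without a separate proof: namely the explicit continuity-of-roots argument for why these curves delimit the $\sigma$-stability regions, the bookkeeping of the $\cot^{-1}$ branches giving the $n\pi/\omega$ family, and the nondegeneracy conditions $\eta_4\neq 0$, $\Theta\neq 0$, $\sin(\hat h\omega)\neq 0$.
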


\section{Validation of theoretical results}

\subsection{Validation of mathematical model}

As mentioned before, for parametric identification, in this section we assume that $u(t)=D$ in the mathematical model (\ref{MatModelBio_gral}). In addition, experimental data is taken from a biochemical process as reference points to identification, see Table~\ref{Tab_ExpData}. For the simulations, we consider initial conditions of $s_0=10$ g/L  for the substrate and $x_0=0.1$  g/L for the biomass, as well as a constant opening of the substrate flow valve equal $D=0.15$ 1/h.  These initial parameters are the same as those implemented in the biochemical process described above.  To obtain the values of the other parameters  nonlinear regression was used based on the Levenberg-Marquardt least squares minimization algorithm. In consequence, the values obtained from our model come closer to the experimental data, see Figure~\ref{Identication} and \ref{IdenticationFhase}. They are
\begin{equation}\label{Parameters}
\begin{array}{ccccc}
a =0.16, & b = 0.11, & c=0.282, & d=0.47, & e=0.212, \\
\alpha=1.3, & \beta = 0.27, & s_0=10,  & \text{and} & \tau=1.8. 
\end{array}
\end{equation}
Whereby, $\kappa_1=0.37985$, $\kappa_2=0.02011$ and $\kappa_3=0.09791$.  The parameter values obtained in the present study fall within the range of those reported in the literature, due to the different operating conditions used in each case, i.e., different carbon source, continuous or batch operation, temperature, pH, among others (\cite{Buehler, Paz, Rama, Zhang}).

\begin{figure}[ht]
	\centering
	\includegraphics[scale=.45]{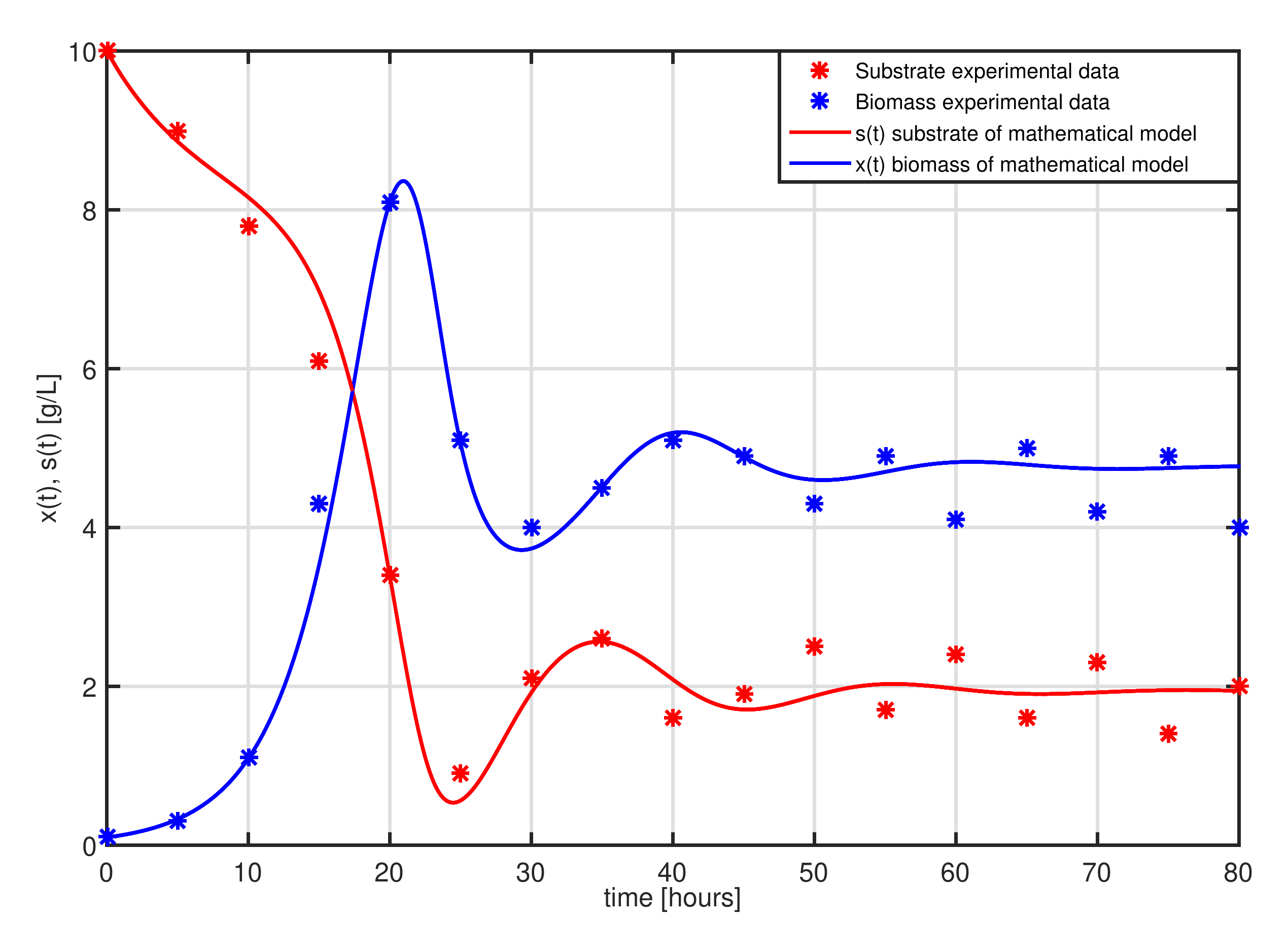}\\
	\caption{Parametric identification of (\ref{bio}) using experimental data when $D=0.15\, 1/h$ and $s_0=10\, g/L$ are given.}\label{Identication}
\end{figure}
\begin{figure}[ht]
	\centering
	\includegraphics[scale=.45]{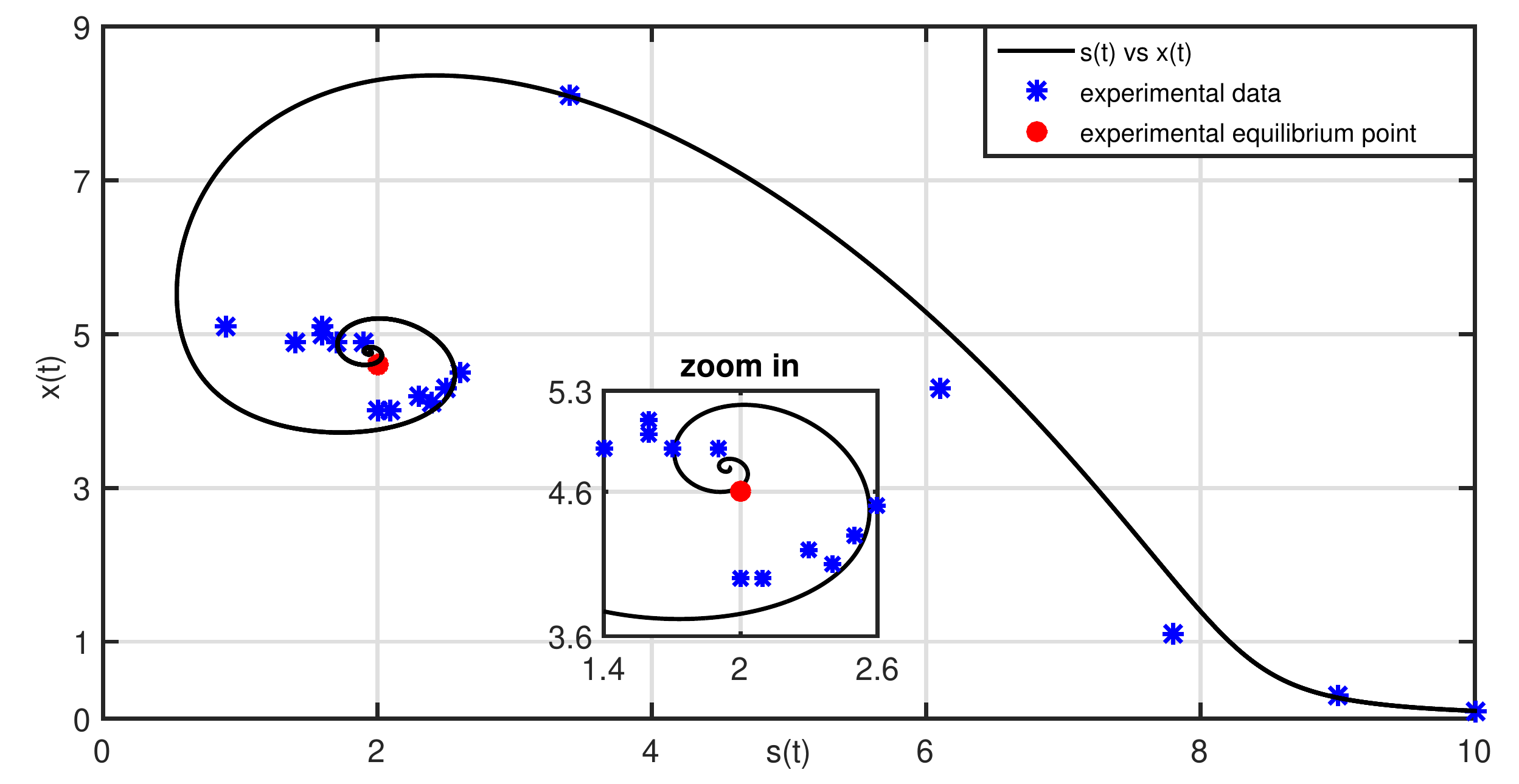}\\
	\caption{Phase diagram of mathematical model (\ref{bio}) and experimental data given in Table~\ref{Tab_ExpData}.}\label{IdenticationFhase}
\end{figure}

The performance of the proposed mathematical model was statistically validated using the dimensionless coefficient of efficiency $(\epsilon_1)$, where
\begin{equation*}
\epsilon_1=1-\frac{\sum_{i=1}^{N}
	\left|Y-Y\sp\ast\right|}
{\sum_{i=1}^{N}
	\left|Y\sp\ast-\overline{Y}\right|},
\end{equation*}
\noindent
$-\infty<\epsilon_1<1$, $Y$ is the simulated value of the variable at time $t_i$, $Y\sp\ast$ is the observed value of the same variable at time $t_i$ and $\overline{Y}$ the mean value of the observed variable. A positive value of $\epsilon_1$ represents an acceptable simulation, whereas $\epsilon_1>0.95$ represents good simulation \cite{Cuevas,Hecke}.

It is recommended, therefore, to use ($\epsilon_1$) in lieu of correlation-based measures to provide a relative assessment of model performance. The statistics used absolute values rather than squared differences. Parameter efficiency ($\epsilon_1$) for all variables using the L-M method are: Biomass (0.850) and Substrate (0.834) and, Interpretation of correlation-based measures, 0.85 indicate that the model explains 85.0\% of the variability in the observed data. Therefore, the proposed model was able to predict experimental data.

Now, using Proposition~\ref{Prop_EquiPoint} and parameters given in (\ref{Parameters}) an equilibrium point of the mathematical model (\ref{bio}) is 
\begin{equation}\label{Equi_SinControl}
x^*= 4.77631\quad \text{and}\quad s^*=1.9427.
\end{equation}
Therefore, the linearization of (\ref{bio}) using (\ref{Parameters})  and (\ref{Equi_SinControl}) is 
\begin{equation*}
A_0= \begin{pmatrix}
-0.3162 & 0\\
0.3580 & -0.0636
\end{pmatrix},\quad\text{and}\quad %
A_1= \begin{pmatrix}
0 & -0.2734\\
0 & 0
\end{pmatrix};
\end{equation*}
and by (\ref{qbio}) its quasi-polynomial is
\begin{equation*}
q(\lambda,\tau)={\lambda}^{2}+ 0.37985\,\lambda+ 0.02011+ 0.09791\,{
	{\rm e}^{-\tau\,\lambda}}.
\end{equation*}
Thus, the polynomial (\ref{polbioo}) gives
\begin{equation*}
P(\omega)={\omega}^{4}+ 0.10406\,{\omega}^{2}- 0.0091818.
\end{equation*}
Finally, using Proposition~\ref{polbioo} and the solution $\omega_0=0.23876$ of the above polynomial, we get some candidate values of delay $\tau=\tau_0$ which may be critical values of $\tau_0$, where the mathematical model (\ref{bio}) has a Hopf bifurcation. Some calculated critical values are
\begin{equation}\label{tau0_exp}
\tau_0=4.9608,\ 18.1187,\ 31.2767\ \text{and}\ 44.4346.  
\end{equation}
These values were obtained using the equation (\ref{tau0}) for $n=0,1,2,3,4$. To corroborate that the points given in (\ref{tau0_exp}) are critical points, in Figure~\ref{Phase_sinControl} the phase diagrams of model (\ref{bio}) are presented. Note that for initial conditions $(s_0=1.85,\, x_0=4.9)$ near the equilibrium point (\ref{Equi_SinControl}), even more the system response (\ref{bio}) remains oscillating around the equilibrium point, thus verifying the existence of a limit cycle on $\tau_0=4.9608$.

\begin{figure}[ht]
	\centering
	\includegraphics[scale=.45]{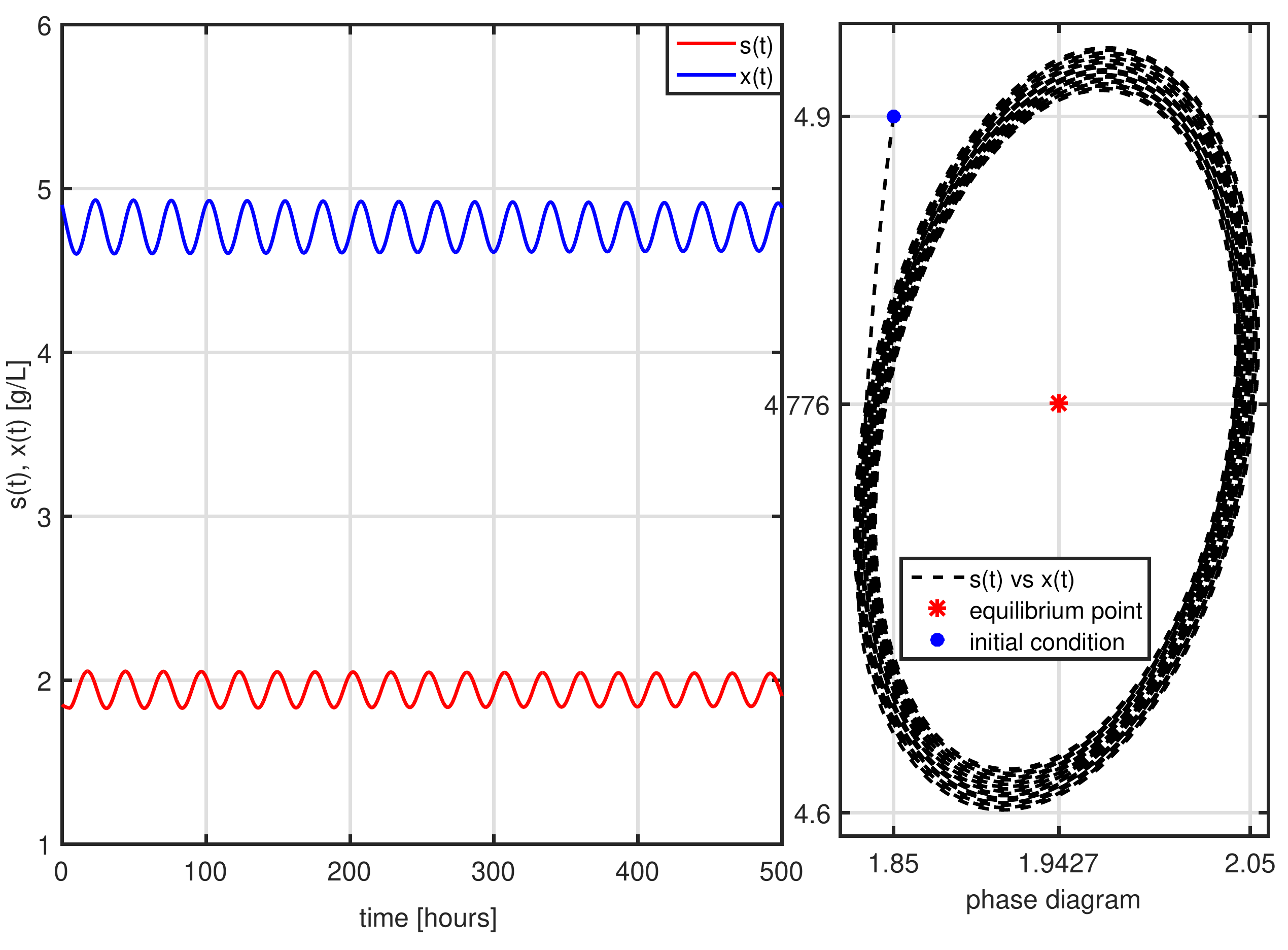}\\
	\caption{System response and phase diagrams of model (\ref{bio}) for  $\tau_0=4.9608$ given in (\ref{tau0_exp}). Here $s_0=1.85$ and $x_0=4.9$.}\label{Phase_sinControl}
\end{figure}   

While in Figure~\ref{Phase_Tau_varios} the stability of the model (\ref{bio}) for $\tau=1,\dots,4$ is shown, using phase diagrams. Note that for the same initial condition $(s_0=1.85,\, x_0=4.9)$, but different values of $\tau$, the system response converges to the equilibrium point (\ref{Equi_SinControl}), corroborating the stability of (\ref{bio}) for all $\tau\in(0,\tau_0)$, as postulated in the Proposition~\ref{Prop_stability}.
\begin{figure}[ht]
	\centering
	\includegraphics[scale=.65]{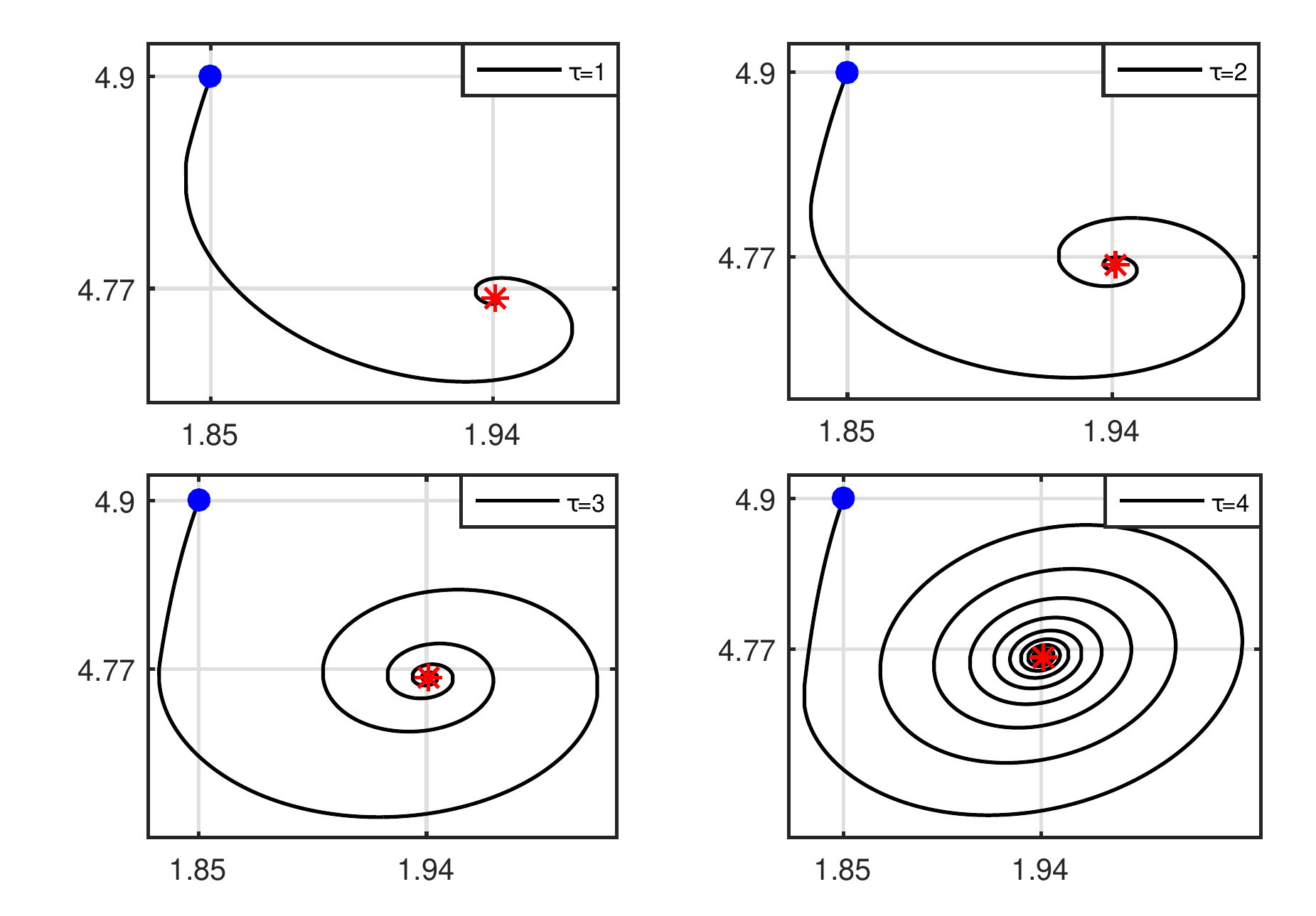}\\
	\caption{Phase diagrams $s(t)$ vs $x(t)$ of model (\ref{bio}) when $\tau=1,2,\dots,4$. Here $(s_0,\,x_0)=(1.85,\, 4.9)$ and $(s^{*},\,x^{*})=(1.94,\,4.77)$.}\label{Phase_Tau_varios}
\end{figure}  
On the other hand, by Proposition~\ref{Prop_sign} we have that (\ref{sign}) is $$\text{sign}\{\kappa_1^{2}-2\kappa_2\}=\text{sign}\{0.37985^2-2(0.02011)\}=\text{sign}\{0.10406\}>0.$$ Thus, in Figure~\ref{Phase_sinControl_tau=7}, the system response and a phase diagram of (\ref{bio}) are presented,  when $\tau=7$. Clearly, the solution $(s(t),\, x(t))$ diverges from the equilibrium point (\ref{Equi_SinControl}), showing via simulation that the model (\ref{bio}) is unstable, when $\tau>\tau_0$ and the initial condition is nearby the equilibrium point and corroborating the provisions of the Proposition~\ref{Prop_stability}.
\begin{figure}[ht]
	\centering
	\includegraphics[scale=.45]{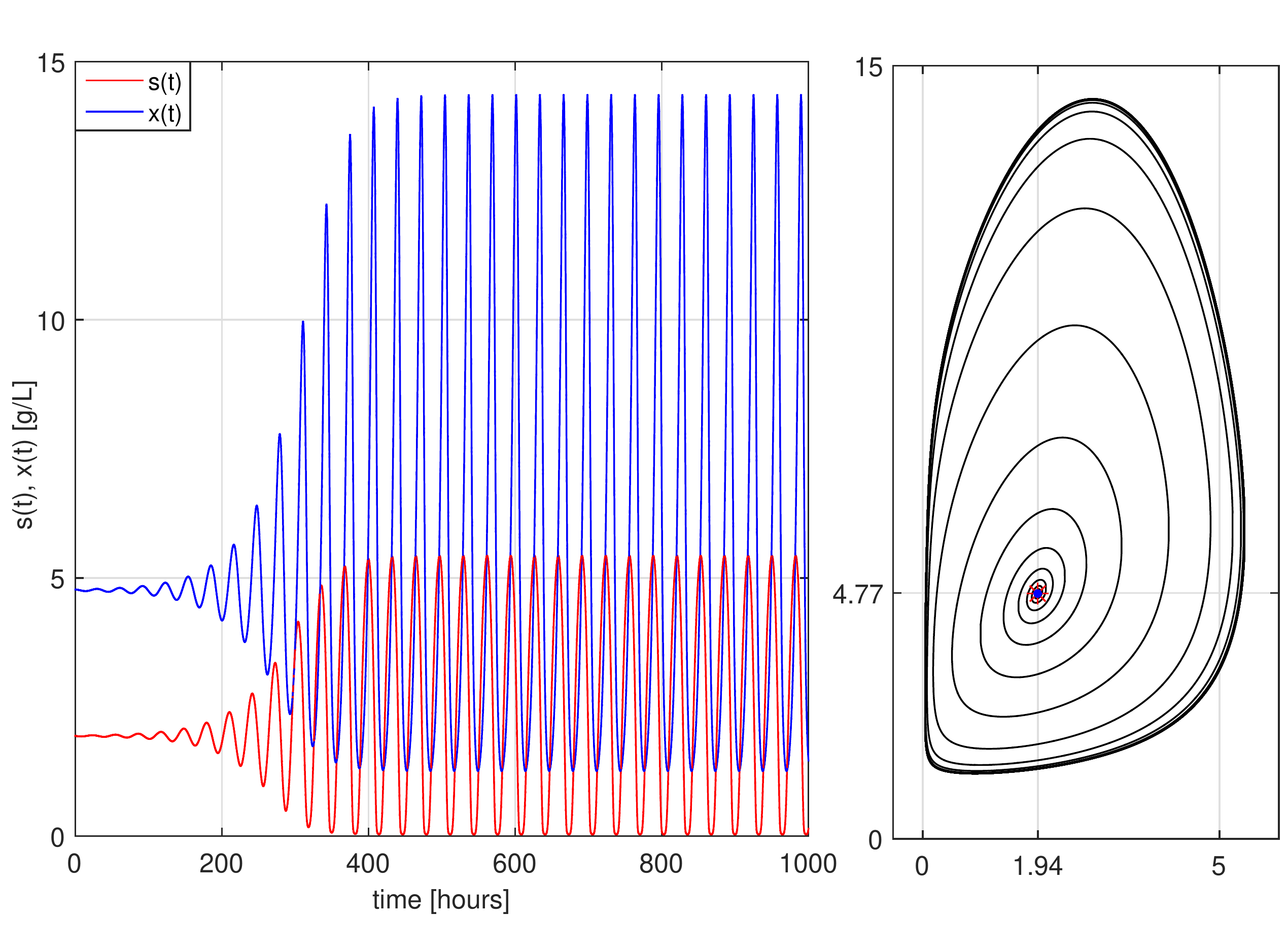}\\
	\caption{System response and phase diagrams of model (\ref{bio}) when  $\tau=7$ and $(s_0,\,x_0)=(1.94,\,4.77)$.}\label{Phase_sinControl_tau=7}
\end{figure}

\subsection{Implementation of delayed controller}

Now, we stabilize the mathematical model (\ref{MatModelBio_gral}) using delayed controllers  of the form (\ref{ControlPR}). 

\medskip
Consider the parameters given in (\ref{Parameters}), $\tau=7$. By Proposition~\ref{PropEqui2} and  $x^*=4.77631$ given, the equilibrium point of the model (\ref{MatModelBio_gral}) is
\begin{equation}\label{Equil_Point_numeric}
x^*=4.77631,\ s^*=1.9427\ \text{and}\ u^*=0.148465.   
\end{equation}
Thus, the linearization of (\ref{MatModelBio_gral}) using (\ref{Matri_LinealControl}) is 
\begin{equation}\label{Matri_LinealControl_numerico}
A_0= \begin{pmatrix}
-0.3162 & 0\\
0.3580 & -0.0636
\end{pmatrix},\ A_1= \begin{pmatrix}
0 & -0.2734\\
0 & 0
\end{pmatrix}\ \text{and}\
B=\begin{pmatrix}
8.05728\\
0
\end{pmatrix}.    
\end{equation}
and the quasi-polynomial (\ref{quasi_control})  is
\begin{equation}\label{quasi_control_numerico}
q(\lambda,\tau,h)=\lambda^2+0.37832\lambda+0.020016+0.09791{\rm e}^{-7\lambda}-2.88459\,k_{r}\, {\rm e}^{-h\lambda}.
\end{equation}
Using Proposition~\ref{Prop_Regions}, the $\sigma$-stability regions in the parametric plane $h-k_r$ are given in Figure~\ref{SigmaRegions} for $\sigma>0$. Notice that these regions are concentric and their size decreases as $\sigma$ increases. Furthermore, a collapse of these regions to the point $(7.38,0.031)$ marked  with a red asterisk $(\textcolor{red}{\ast})$ can be observed when $\sigma=\sigma^{*}$, which implies that $\sigma^{*}=0.24$ is the maximum achievable exponential decay
of the linear systems (\ref{Matri_LinealControl_numerico}) or quasi-polynomial (\ref{quasi_control_numerico}).

\medskip
\noindent
The following observations may be proposed:
\begin{itemize}
	\item the mathematical model (\ref{MatModelBio_gral}) is unstable using the parameters (\ref{Parameters}) and $\tau=7$, as shown in Figure~\ref{Phase_sinControl_tau=7}.
	\item the Figure~\ref{SigmaRegions} shown geometrically all the gains (points) $h$ and $k_r$ 
	such that the delayed controller (\ref{ControlPR}) can ensure convergence to the equilibrium point $(s^*=1.9427,x^*=4.7763)$ when applied to the unstable model (\ref{MatModelBio_gral}) with (\ref{Parameters}) and $\tau=7$.
	\item a delayed controller of the form (\ref{ControlPR}) cannot assure convergence to the equilibrium point (\ref{Equil_Point_numeric}), when applied to the unstable model (\ref{MatModelBio_gral}) with (\ref{Parameters}) and $\tau=7$ if the gains $h$ and $k_r$ are outside these concentric regions of $\sigma$-stability.
	\item a delayed controller of the form (\ref{ControlPR}) with $h\in[0,2.6]$ cannot guarantee convergence to the equilibrium point (\ref{Equil_Point_numeric}) when applied to the unstable model (\ref{MatModelBio_gral}) with (\ref{Parameters}) and $\tau=7$.
	\item by the previous item, a controller with only proportional action $u(t)=k_px(t)$ (P control) cannot guarantee convergence to the equilibrium point (\ref{Equil_Point_numeric}) when applied to the unstable model (\ref{MatModelBio_gral}) with (\ref{Parameters}) and $\tau=7$.
\end{itemize}

\begin{figure}[ht]
	\centering
	\includegraphics[scale=.4]{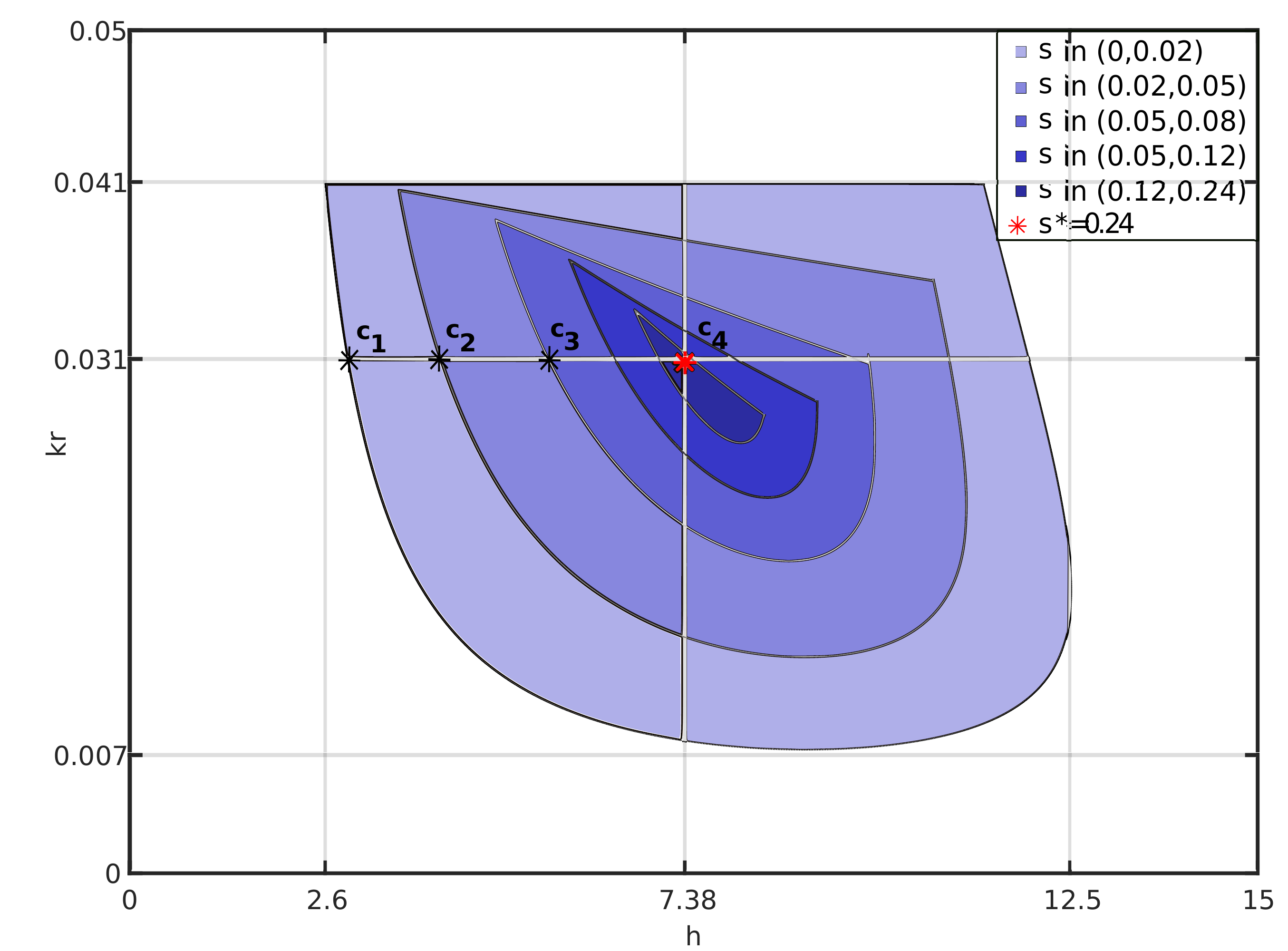}
	\caption{$\sigma$-stability regions to tune the delayed controller (\ref{ControlPR}).}\label{SigmaRegions}
\end{figure}

\medskip
To ratify the above, Figure~\ref{SystemsResp_Control} shows the response of the mathematical model (\ref{MatModelBio_gral}) when $u(t)=D$ and $u(t)=k_r x(t-h)$. Here, the initial condition is $(s_0=1.94,\, x_0=4.77)$ and the applied control is $u(t)=D=0.15$  for $t\in(0,T=500)$, once the instability of the model (\ref{MatModelBio_gral}) can be observed, then the control $u(t)=k_r x(t-h)$ is applied. In other words, consider the model given in (\ref{MatModelBio_gral}), the initial condition $(s_0=1.94,\, x_0=4.77)$, the parameters (\ref{Parameters}) and $\tau=7$, the mathematical model response (\ref{MatModelBio_gral}) is depicted in Figure~\ref{SystemsResp_Control}, when
\begin{equation*}
u(t)=\left\{  
\begin{array}{cc}
D, & t\in(0,T);  \\
k_r x(t-h), & t\in[T,t_f),
\end{array}
\right.    
\end{equation*}
where $T=500$, $t_f=1000$, $D=0.15$, and the gains $h$ and $k_r$ of the delayed controller are only the four points $c_j=(h_j,k_{r_j})$, $j=1,\dots,4$, marked with $(\ast)$ in Figure~\ref{SigmaRegions}, namely $c_1=(2.89,0.031)$, $c_2=(4.13,0.031)$, $c_3=(5.58,0.031)$ and $c_4=(7.38,0.031)$. Corresponding to border points of the $\sigma$-stable regions, when $\sigma=0,0.02,0.05$ and $0.24$. Furthermore, as mentioned in Definitions~\ref{def-sigma-est} and \ref{def-sigma-est-v2}, this $\sigma$ determines the exponential decay in the system response (\ref{MatModelBio_gral}), which can also be observed in Figure~\ref{SystemsResp_Control}. The signals delayed controller (\ref{ControlPR}) with $c_j=(h_j,k_{r_j})$, $j=1,\dots,4$ applied to the mathematical model (\ref{MatModelBio_gral}) as depicted in Figure~\ref{ControlSignal}. It is clear that this control signal is within the range established for $u(t)\in[0,\, 1]$ and this signal exponentially stabilize model fractional mathematical model (\ref{MatModelBio_gral}), even though it is initially unstable for $t\in(0,T)$. Calculation of the distribution of Hopf point contributed to enhancing the stability of the fermentation process, maintaining high product quality, and providing insights into system dynamics.

\begin{figure}[ht]
	\centering
	\includegraphics[scale=.4]{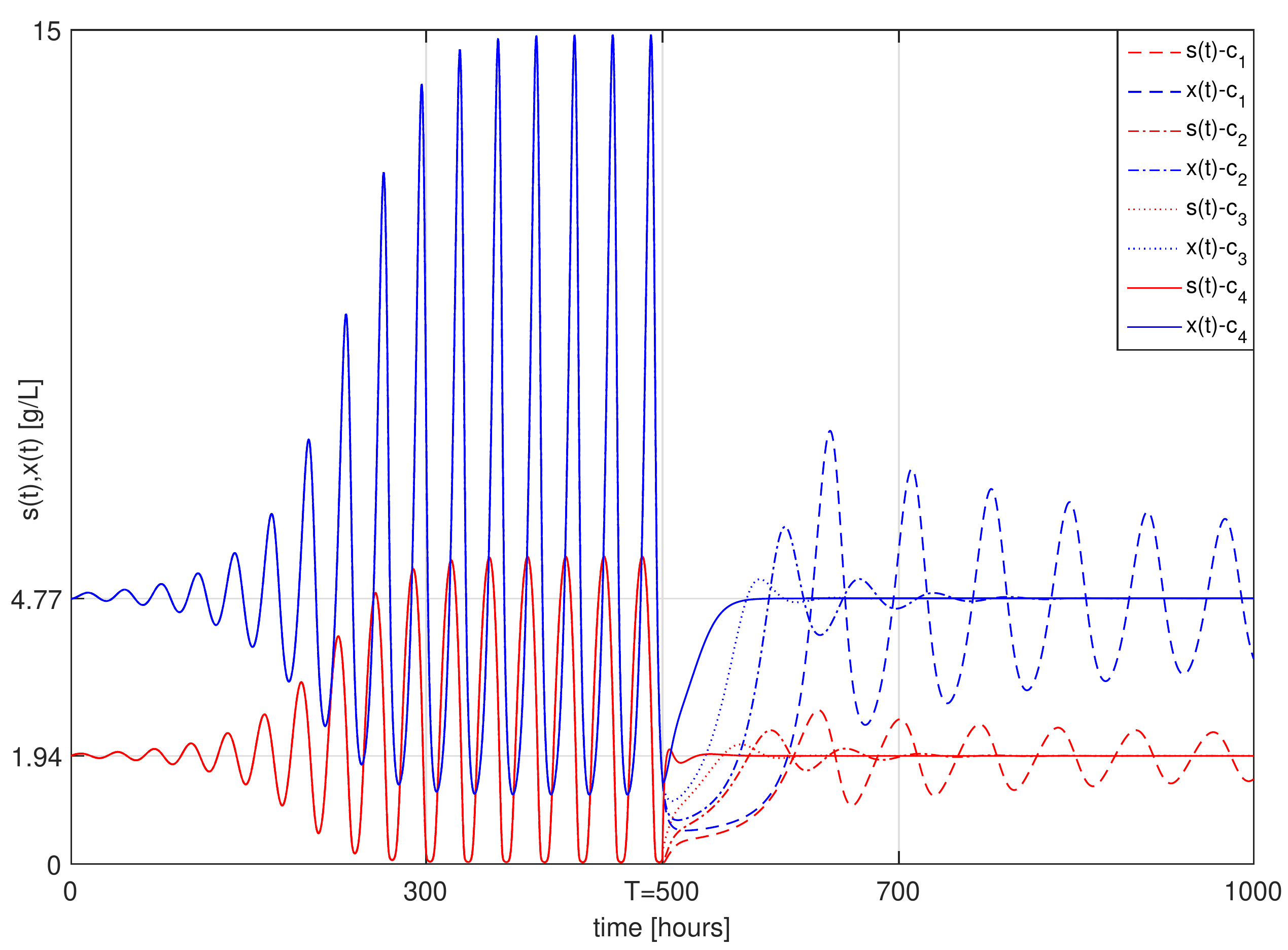}
	\caption{Mathematical model (\ref{MatModelBio_gral}) with delayed controller (\ref{ControlPR}) using $c_j=(h_j,k_{r_j})$, $j=1,\dots,4$. Here $c_1=(2.89,0.031)$, $c_2=(4.13,0.031)$, $c_3=(5.58,0.031)$ and $c_4=(7.38,0.031)$.}\label{SystemsResp_Control}
\end{figure}

\begin{figure}[H]
	\centering
	\includegraphics[scale=.4]{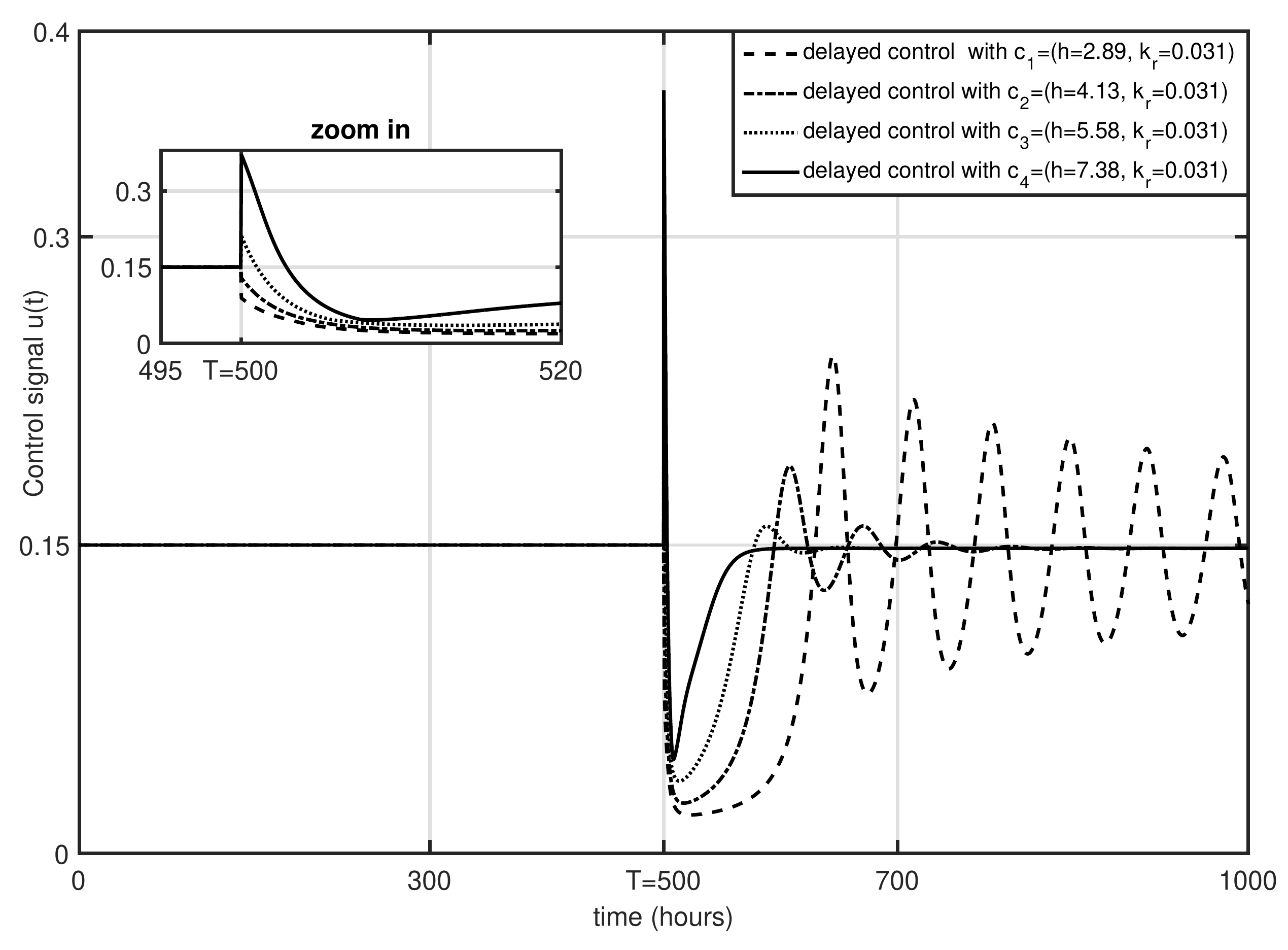}
	\caption{Signals delayed controller (\ref{ControlPR}) applied to the mathematical model (\ref{MatModelBio_gral}).}\label{ControlSignal}
\end{figure}

\section{Conclusions}

In this article  a fractional Lotka-Volterra mathematical model with time-delay is presented  fitting the data provided by a  biochemical process in a bioreactor  known as  {\it Zymomonas mobilis}. This model allows an analysis to determine Hopf bifurcations when the delay is used as a bifurcation parameter. Furthermore,  a well-founded proposal for the design and tuning of delayed controllers to stabilize the biochemical process is given. The efficiency and effectiveness of the theoretical results formulated are illustrated by  numerical simulation. It is worth pointing out those bifurcations regions of operation and control of oscillatory dynamics in the biosystem are of practical importance as they possess many potential applications in biofuels, medical science and biochemistry. This is due to the fact that these systems have a time delay in common in their signals as consequences of dead time in the quantification  of a state.

\section*{Appendix: Time-delay systems}\label{SubSec_PreliRes}

Since we consider delays in the mathematical model used to describe the dynamics of the biorector,  some concepts and criteria on the stability of time-delay systems are postulated in this Appendix.

\medskip
\noindent
Consider a time-delay nonlinear system of the form
\begin{align}\label{NonlinearSystem}
\dot{\vec{z}}(t)= G(\vec{z}(t),\vec{z}(t-\tau),\vec{u}(t)),
\end{align}
where $\vec{z}(t)=(z_1(t)\ z_2(t)\dots z_n(t))^{\intercal}$, 
$\vec{z}(t\!-\!\tau)=\left( z_1(t\!-\!\tau)\ z_2(t\!-\!\tau)\dots z_n(t\!-\!\tau)\right)^{\intercal}$, $\vec{u}(t)=(u_1(t)\ u_2(t)\dots u_m(t) )^{\intercal}$ 
and 
$G(\vec{z}(t),\vec{z}(t\!-\!\tau),\vec{u}(t))$=$(g_{1}(\vec{z}(t),\vec{z}(t\!-\!\tau),
\vec{u}(t))$  $g_{2}(\vec{z}(t),\vec{z}(t\!-\!\tau),\vec{u}(t))\dots g_{n}(\vec{z}(t),\vec{z}(t\!-\!\tau),\vec{u}(t)))^{\intercal}$. For $t\geq
0$ denote by $\vec{z}(t,\phi )$ the solution of the system with initial condition $%
\vec{\phi} \in \mathfrak{C}$, and by $\mathfrak{C}:=C([-\tau,0],\mathbb{R}^{n})$ the Banach space with norm $\Vert \vec{\phi} \Vert_{\tau}:=\max_{\theta \in \lbrack -\tau,0]}\Vert \vec{\phi} (\theta )\Vert $. Finally, $\Vert \cdot \Vert $ denotes the euclidean norm.

\medskip
\noindent
The equilibrium ${\vec{z}}^{ * }={ (z}_{ 1 }^{ * },{ z }_{ 2 }^{ * },\dots ,{ z }_{ n }^{ * })^{\intercal}$, $\vec{u}^*=(u_{1}^{*}\ u_{2}^{*}\dots u_{m}^{*})^{\intercal}$ is the one that satisfies
$G({ \vec{z}}^{ * },{ \vec{z}}_{ \tau  }^{ * },\vec{u}^*)=G({ \vec{ z }  }^{ * },{ \vec{ z }  }^{ * },\vec{u}^*)=0.$ Thus, the linearization of (\ref{NonlinearSystem}) at the equilibrium point is 
\begin{equation}\label{GralLineal}
\dot{\vec {z}}(t)=A_0\vec{z}(t) +A_1\vec{z}(t-\tau)+B\vec{u}(t),
\end{equation}
where
\begin{equation*}
\begin{array}{ll}
A_0=\left( \begin{matrix} 
\frac{{\partial} g_{ 1 }}{{ \partial} { z_1 }} & \frac{{\partial} g_{ 1 }}{{ \partial} { z_2 }} & \ldots & \frac{{\partial} g_{1}}{{ \partial} { z_n }}\\ 
\vdots  &  & \ddots  & \vdots  \\ 
\frac{{\partial} g_{ n }}{{ \partial} { z_1 }} & \frac{{\partial} g_{ n }}{{ \partial} { z_2 }} & \ldots & \frac{{\partial} g_{n}}{{ \partial} { z_n }}\\ 
\end{matrix} \right)\Big{|}_{
	\begin{array}{l}
	z=z^*\\
	u=u^*
	\end{array}
},
& A_1=\left( \begin{matrix} 
\frac{{\partial} g_{ 1 }}{{ \partial} { z_{1_{\tau}} }} & \frac{{\partial} g_{ 1 }}{{ \partial} { z_{2_{\tau}} }} & \ldots & \frac{{\partial} g_{1}}{{ \partial} { z_{n_{\tau}} }}\\ 
\vdots  &  & \ddots  & \vdots  \\ 
\frac{{\partial} g_{ n }}{{ \partial} { z_{1_{\tau}} }} & \frac{{\partial} g_{ n }}{{ \partial} { z_{2_{\tau}} }} & \ldots & \frac{{\partial} g_{n}}{{ \partial} { z_{n_{\tau}} }}\\ 
\end{matrix} \right) \Big{|}_{
	\begin{array}{l}
	z=z^*\\
	u=u^*
	\end{array}
}
\end{array}
\end{equation*}
$B= \begin{pmatrix}
\frac{{\partial} g_{ 1 }}{{ \partial} { u_1 }} & \frac{{\partial} g_{ 1 }}{{ \partial} { u_2 }} & \ldots & \frac{{\partial} g_{1}}{{ \partial} { u_m }}\\ 
\vdots  &  & \ddots  & \vdots  \\ 
\frac{{\partial} g_{ n }}{{ \partial} { u_1 }} & \frac{{\partial} g_{ n }}{{ \partial} { u_2 }} & \ldots & \frac{{\partial} g_{n}}{{ \partial} { u_m }}\\ 
\end{pmatrix} \Big{|}_{
	\begin{array}{l}
	x=x^*\\
	u=u^*
	\end{array}
}$, 

\noindent
with $\frac{{\partial} g_{ j }}{{ \partial} { z_k }}=\frac{\partial g_{j}(\vec{ z }(t), \vec{z}(t-\tau), \vec{u}(t))}{\partial z_k(t)}$,
$\frac{{\partial} g_{ j }}{{ \partial} { z_{k_{\tau}}}}=\frac{\partial g_{j}(\vec{z}(t), \vec{z}(t-\tau),\vec{u}(t))}{\partial z_k(t-\tau)}$ and \newline $\frac{{\partial} g_{ j }}{{ \partial} { u_l }}=\frac{\partial g_{j}(\vec{z}(t), \vec{z}(t-\tau),\vec{u}(t))}{\partial u_l(t)}$, $j,k=1,2,\dots,n$, $l=1,2,\dots,m$.
Now, consider a delayed controller of the form
\begin{equation}\label{GralControl}
\vec{u}(t)=K_1\vec{z}(t)+K_2\vec{z}(t-h),
\end{equation}
where $K_1,\ K_2\in\erre^{m\times n}$ and $h\in\erre^{+}$ is a delay (equal or different from $\tau$). If $h\neq\tau$, then  the closed-loop (\ref{GralLineal})-(\ref{GralControl}) is
\begin{equation}\label{GralLazoCerrado}
\dot{\vec{z}}(t)=\left[A_0+BK_1\right] \vec{z}(t) + A_1\vec { z}(t-\tau)+BK_2\vec{z}(t-h).
\end{equation}
Thus, the characteristic equation (quasi-polynomial) of system (\ref{GralLazoCerrado}) is of the form
\begin{equation}\label{GralQuasi}
q(\lambda,\tau)=\text{det}\{\lambda I_n -\left[A_0+BK_1\right]
-A_1{\rm e^{-\tau\lambda}}- BK_2{\rm e^{-h\lambda}}\}.
\end{equation}
\begin{definition}
	\cite{gu2003stability} \label{def-sigma-est} The systems (\ref{GralLazoCerrado}) is said $\sigma $-stable if the system response $z(t,\phi )$ satisfies the following inequality
	\begin{equation*}
	\Vert \vec{z}(t,\phi )\Vert\leq Le^{-\sigma t}\Vert \vec{\phi} \Vert _{\tau},\qquad t\geq 0,  
	\end{equation*}%
	where $L>0$, $\sigma \geq 0$, $\vec{\phi}:[-\tau,0]\rightarrow C([-\tau,0],\mathbb{R}^{n})
	$ is the initial condition. 
\end{definition}

\begin{definition}\cite{gu2003stability}\label{def-sigma-est-v2} 
	Consider the quasi-polynomial (\ref{GralQuasi}), $\sigma \in
	\mathbb{R}$ a positive constant and $$\lambda_{0}=\max_{j=1,\ldots ,\infty
	}\left\{ \mathrm{Re}\{\lambda_{j}\}\ |\ q(\lambda_{j},\tau)=0,\
	\lambda_{j}\in \mathbb{C}\right\},$$ where $\mathrm{Re}\{\lambda_{j}\}$ denote the real
	part of $\lambda_{j}$. Then, the system (\ref{GralLazoCerrado}) is said $\sigma$-stable if $\lambda_{0}\leq -\sigma$.
\end{definition}

%


\bibliographystyle{plain}
\bibliography{bioreactor4arxiv}

\end{document}